\documentclass[12pt]{amsart}
\usepackage{amssymb,latexsym,amsfonts}
\usepackage{amsmath}
\usepackage{amscd}
\usepackage{pb-diagram,pb-xy}

\usepackage{graphicx}
\usepackage{hyperref}
\usepackage{color}
\usepackage[matrix,arrow]{xy}
\textwidth=6.2truein
\mathsurround=1pt
\textheight=8.2truein
\hoffset-15mm


\newcommand{\ccc}{{ c}\,}

\newcommand{\cc}{{\rm c}}

\DeclareMathAlphabet{\mathpzc}{OT1}{pzc}{m}{it}

\newtheorem*{prop-non}{Proposition}
\newtheorem*{thm-non}{Theorem}
\newtheorem{theorem}{Theorem}[section]
\newtheorem{corollary}[theorem]{Corollary}
\newtheorem{lemma}[theorem]{Lemma}
\newtheorem{remark}{Remark}[section]
\newtheorem{proposition}[theorem]{Proposition}
\newtheorem{thm}{Theorem}[section]
\theoremstyle{definition}
\newtheorem{definition}[thm]{Definition}

\parindent0cm
\parskip=.5\baselineskip
\sloppy

\begin{document}

 \author{ Mohammed  Larbi Labbi}
 \title[Pontrjagin  classes]{ Remarks on Bianchi sums and Pontrjagin classes}
   \date{}
\subjclass[2010]{Primary 53C20, 53B20}
\keywords{Double forms, Bianchi sum, Thorpe manifold, $k$-conformally flat, Pontrjagin numbers, Pontrjagin classes.}
\thanks{}
\begin{abstract} We use the exterior and composition products of double forms together with the alternating operator to reformulate Pontrjagin classes and all Pontrjagin numbers in terms of the Riemannian curvature. We show that the alternating operator is obtained by a succession of applications of the first Bianchi sum and we prove some useful identities relating the previous four operations on double forms.\\
As an application, we prove that for a $k$-conformally flat manifold of dimension $n\geq 4k$, the Pontrjagin classes $P_i$ vanish for any $i\geq k$.\\
Finally, we study the equality case in an  inequality of Thorpe between  the Euler-Poincar\'e charateristic and the $k$-th Pontrjagin number of a $4k$-dimensional Thorpe manifold.
\end{abstract}
   \maketitle
\tableofcontents
\section{Introduction}
Let $(M,g)$ be a compact oriented  Riemannian smooth  manifold of dimension $n$, and $k$  a positive integer such that $4k\leq n$. We denote by $R$  the Riemann curvature tensor of $(M,g)$ seen as a $(2,2)$ double form.  It results from a theorem of Chern \cite{Chern} that  the following object  (which is written  using double forms operations)
\[P_k(R)=\frac{1}{(k!)^2(2\pi)^{2k}}{\rm Alt}\big(R^k\circ R^k\big),\]
is a closed differential form of degree $4k$ that represents the $k$-th pontrjagin class of $M$. In particular, if $n=4k$, the integral over $M$ of  $P_k(R)$ is a topological invariant, namely it is the $k$-th Pontrjagin number of $M$. \\
In this paper, we first study in some details the double forms operations that were used to define the above differential form, namely the ${\rm Alt}$ operator, the exterior product of double forms  that were used to define the $k$-th power of $R$ and finally the composition product $\circ$.\\
 In particular, we show that the ${\rm Alt}$ operator is obtained by a succession of applications  the first Bianchi sum $\mathfrak{S}$ in the algebra of double forms, precisely we prove that 
\begin{prop-non}
Let $\omega$ be a $(p,q)$ double form then we have
\[ {\rm Alt}(\omega)=(-1)^{pq+\frac{q(q-1)}{2}}\frac{p!}{(p+q)!}\mathfrak{S}^q\omega.\]
where $\mathfrak{S}^q=\mathfrak{S}\circ ... \circ \mathfrak{S}$, $q$-times. In particular, if $\omega$ satisfies the first Bianchi identity then ${\rm Alt}(\omega)=0.$
\end{prop-non}
We prove also another useful identity, namely
\begin{prop-non}
  Let  $q,s\geq 1$ and $p\geq 0$. If $\omega_1$ is a  $(p,q)$ double form and  $\omega_2$ is a $(q-1,s-1)$  double form  then 
\[  {\rm Alt}(g\omega_2\circ \omega_1)=(-1)^{p}\frac{p+1}{s}{\rm Alt}\Big( \omega_2   \circ  \mathfrak{S}(\omega_1)\Big).\]
Where $g$ is the Riemannian metric. In particular, if $\omega_1$ satisfies the first Bianchi identity then $ {\rm Alt}(g\omega_2\circ \omega_1)=0$.
\end{prop-non}
As a direct consequence of the previous proposition, we show that Pontrjagin classes depend only on the Weyl part of the Riemann curvature tensor $R$. In particular, all the Pontrjagin classes of a conformally-flat manifold vanish. \\
More generally, for $n\geq 4k$, following Kulkarni we shall  say that a Riemannian $n$-manifold  $(M,g)$  is  $k$-conformally flat if  its Riemann curvature tensor $R$ satisfies   $R^k=gH$. In other words, the $k$-th exterior power of $R$  is divisible by $g$ in the exterior algebra of double forms.   We recover the usual conformally-flat manifolds for $k=1$.\\
 Another  consequence of above formula is the following 
\begin{thm-non}
If  $(M,g)$ is a $k$-conformally flat Riemannian manifold of dimension  $n\geq 4k$, then the Pontrjagin classes $P_i$ of $M$ vanish for $i\geq k$.
\end{thm-non}

In section \ref{Thorpe-man}, we study  a generalization of $4$ dimensional Einstein manifolds namely Thorpe manifolds. 
An oriented compact Riemannian manifold $(M,g)$ of dimension $n=4k$ is said to be a \emph{Thorpe manifold} if $\ast R^{k}=R^{k}.$ Where $R$ is the Riemann curvature tensor seen as a $(2,2)$ double form, $R^{k}$ its exterior power and $\ast$ is the double Hodge star operator acting on double forms. The usual Einstein $4$-manifollds are obtained for $k=1$. \\
The following theorem provides topological obstructions to the existence of a Thorpe metric
\begin{thm-non}
Let $(M,g)$ be a compact orientable $4k$-dimensional Thorpe manifold, then
\[\chi(M)\geq \frac{k!k!}{(2k)!}|p_k(M).\]
Where $p_k(M)$ is the $k$-th Pontrjagin number of $M$ and $\chi(M)$  is the Euler-Poincar\'e characterestic of $M$. Furthermore if equality holds then $M$ is $k$-Ricci flat, that is $\cc R^k=0.$
\end{thm-non}
The first part of the previous theorem is originally  due to Thorpe \cite{Thorpe}, however the second part of the previous theorem is a new result. Both parts of the theorem will be proved in the last section using double forms formalism.\\
  In dimension $4$,  the second part of the previous theorem  tell us that a compact orientable Einstein manifold $M$ that  satisfies $2\chi(M)=|p_1(M)|$  must be Ricci-flat. We note that Hitchin \cite{Hitchin} showed that such a metric must be then 
 either flat or a Ricci flat Kahler metric on the $K3$ surface (Calabi-Yau metric), or a quotient of it.\\
In dimension $8$, Kim \cite{Kim2} showed  if equality holds in the previous theorem (that is $6\chi(M)=|p_2(M)|$) and if we assume also that the metric is Einstein then the metric must be flat. We prove the following generalization of Kim's result
\begin{thm-non}
A compact orientable manifold $M$  of dimension $8k$ that is at the same time hyper $2k$-Einstein and Thorpe and satisfies $\frac{(4k)!}{(2k)!(2k)!}\chi(M)=|p_{2k}(M)|$  must be $k$-flat, that is $R^k=0$.
\end{thm-non}
Where a metric is said to be hyper $2k$-Einstein if  its Riemann curvature tensor satisfies $\cc R^k =\lambda g^{2k-1},$ for some constant real number $\lambda$. We recover the usual Einstein metrics for $k=1$.\\
In the last section, we prove a Stehney type formula for all the (mixed) Pontrjagin numbers  $p_1^{k_1}p_2^{k_2}\cdots p_m^{k_m}$. Recall that these numbers  are important topological  invariants for a manifold, they
are oriented cobordism invariant, and together with Stiefel-Whitney numbers they determine an oriented manifold's oriented cobordism class. Furthermore, the signature and the $\hat{A}$ genus can be expressed explicitely through linear combinations of   Pontrjagin numbers.
\begin{thm-non}
Let $(M,g)$ be a compact $4k$-dimensional Riemannian oriented manifold  with Riemann curvature tensor $R$, seen as a $(2,2)$ double form,  and let $ k_1, k_2, ..., k_m$  be a  collection of natural numbers  such that $k_1+2k_2+...+mk_m =k$.Then the Pontrjagin number $p_1^{k_1}p_2^{k_2}\cdots p_m^{k_m}$  of $M$  is given by the integral over $M$ of the following $4k$-form
\begin{equation*}
 P_1^{k_1}P_2^{k_2}\cdots P_m^{k_m}=\frac{(4k)!}{[(2k)!]^2(2\pi)^{2k}}  \Big(\prod_{i=1}^{m}\frac{[(2i)!]^2}{(i!)^{2k_i}(4i)!}\Big){\rm Alt}\Big[(R\circ R)^{k_1}(R^2\circ R^2)^{k_2}\cdots (R^m\circ R^m)^{k_m}\Big].
\end{equation*}
Where all the powers over double forms are taken with respect to the exterior product of double forms.
\end{thm-non}
\section{ The Exterior and composition  Algebras  of Double Forms}\label{prem.}
\subsection{The exterior algebra of double forms}

Let $(V,g)$ be an Euclidean real vector space  of finite dimension $n$.  Let
  $\Lambda V^{*}=\bigoplus_{p\geq 0}\Lambda^{p}V^{*}$  (resp.
   $\Lambda V=\bigoplus_{p\geq 0}\Lambda^{p}V$) denotes the exterior algebra
 of the dual space  $V^* $ (resp.   $V$). We define \emph{the space of exterior  double forms} of $V$  (resp. \emph{the space of exterior double vectors}) as
 $${\mathcal D}(V^*)= \Lambda V^{*}\otimes \Lambda V^{*}=\bigoplus_{p,q\geq 0}
  {\mathcal D}^{p,q}(V^*),$$
(resp.
 $${\mathcal D}(V)= \Lambda V\otimes \Lambda V=\bigoplus_{p,q\geq 0}
  {\mathcal D}^{p,q}(V)),$$
 where $  {\mathcal D}^{p,q}(V^*)= \Lambda^{p}V^{*} \otimes  \Lambda^{q}V^{*},$ (resp.  $  {\mathcal D}^{p,q}(V)= \Lambda^{p}V \otimes  \Lambda^{q}V).$\\
The space ${\mathcal D}(V^*)$ is naturally  a bi-graded associative  algebra, called \emph{double exterior algebra of $V^*$},
 where for simple elements  $\omega_1=\theta_1\otimes \theta_2\in { \mathcal D}^{p,q}(V^*)$ and
 $\omega_2=\theta_3\otimes \theta_4\in  {\mathcal D}^{r,s}(V^*)$, the multiplication is given  by
 \begin{equation}
 \label{def:prod}
 \omega_1\omega_2= (\theta_1\otimes \theta_2 )(\theta_3\otimes
 \theta_4)=
 (\theta_1\wedge \theta_3 )\otimes(\theta_2\wedge \theta_4)\in
    {\mathcal D}^{p+r,q+s}(V^*).\end{equation}
Where $\wedge$ denotes the standard exterior product on the exterior algebra $ \Lambda V^{*}$.\\
A \emph{double form of degree $(p,q)$} is by definition an  element of the tensor product
    $  {\mathcal D}^{p,q}(V^*)= \Lambda^{p}V^* \otimes \Lambda^{q}V^*$.\\
 The above multiplication in ${\mathcal D}(V^*)$  is called the \emph{exterior product of double forms.}
\subsection{The composition algebra of double forms}
We define a second product in the space of double exterior vectors ${\mathcal D(V)}$  (resp. in the space of double exterior forms ${\mathcal D(V^*)}$ ) which will be denoted by $\circ$ and will be called the \emph{composition product}, see \cite{Greub-book, Labbialgebraic}. Given 
 $\omega_1=\theta_1\otimes \theta_2\in { \mathcal D}^{p,q}$ and
    $\omega_2=\theta_3\otimes \theta_4\in  {\mathcal D}^{r,s}$ two simple double exterior  forms ( or double exterior vectors), we  set
    \begin{equation}
\omega_1\circ\omega_2=(\theta_1\otimes \theta_2)\circ (\theta_3\otimes \theta_4)=\langle \theta_1,\theta_4\rangle \theta_3\otimes \theta_2\in  {\mathcal D}^{r,q}.
\end{equation}
It is clear that $\omega_1\circ\omega_2=0$ unless $p=s$. Then one can extends the definition to all double forms using linearity. We emphasize that, in contrast with the exterior product of double forms, the composition product clearly depends on the metric $g$.\\
It turns out that the space of double forms endowed with the composition product  $\circ$ is  an associative algebra.\\
 For a double form (or vector) $\omega \in  {\mathcal D}^{p,q}$, we denote by $\omega^t\in  {\mathcal D}^{q,p}$ the transpose of $\omega$. For a simple double form it is defined by
\begin{equation}
\big( \theta_1\otimes \theta_2\big)^t=\theta_2\otimes \theta_1.
\end{equation} 
Using linearity the previous definition can be extended to all double forms. Note that a double form  $\omega$ is a symmetric double form if and only if $\omega^t=\omega$. 

\section{Basic maps in the space  of double forms}
Let $(e_1,...,e_n)$ be an orthonormal basis of $V$ and $(e_1^*,...,e_n^*)$ its dual basis of $1$-forms.\\
Let  $h=e_i^*\otimes e_j^*\in \mathcal{D}^{(1,1)}(V^*)$ be a simple double form,  we define four basic  linear  maps $\mathcal{D}(V^*)\rightarrow \mathcal{D}(V^*)$ as follows. For $\omega=\theta_1\otimes \theta_2 \in \mathcal{D}(V^*)$ we set
\begin{align*} 
L^{1,1}_h(\omega)&=e_i^*\wedge \theta_1 \otimes e^*_j\wedge \theta_2,\\
L^{-1,-1}_h(\omega)&=i_{e_i} \theta_1 \otimes i_{e_j} \theta_2,\\
L^{1,-1}_h(\omega)&=e_i^*\wedge \theta_1 \otimes i_{e_j}\theta_2,\\
L^{-1,1}_h(\omega)&=i_{e_i} \theta_1 \otimes e^*_j\wedge \theta_2.
\end{align*}
Where $i_{e_j}$ denotes the usual interior product. We use then linearity to extend first the previous maps  $L_h^{a,b}$  to  $\mathcal{D}(V^*)$ and secondly to define them for any $h=\in \mathcal{D}^{(1,1)}(V^*)$. For instance, for the inner product  $g=\sum_{i=1}^n e_i^*\otimes e_i^*\in \mathcal{D}^{(1,1)}(V^*)$ we have

$$L_g^{a,b}(\omega)=\sum_{i=1}^n L_{e_i^*\otimes e_j^*}^{a,b}(\omega).$$
Recall that the exterior algebra $\Lambda V^*$  inherits naturally anl inner product, denoted $\langle .,.\rangle$,  from the inner product $g$ of $V$. This inner product can be canonically extended to an inner product   denoted also by $\langle .,.  \rangle$ on the space of double forms
 $\mathcal{D}(V^*)$ . Precisely, for simple double forms in $\mathcal{D}^{(p,q)}(V^*) $ we set
$$\langle \theta_1\otimes \theta_2, \theta_3\otimes \theta_4\rangle=\langle \theta_1,\theta_3\rangle\langle \theta_2,\theta_4\rangle.$$

With respect to the previous canonical inner product we have
\begin{proposition} For any $h\in \mathcal{D}^{(1,1)}(V^*)$ we have
\begin{enumerate}
\item The map $L^{-1,-1}_h$ is the adjoint map of   $L^{1,1}_h$.
\item  The map $L^{-1,1}_h$ is the adjoint map of   $L^{1,-1}_h$.
\end{enumerate}
\end{proposition}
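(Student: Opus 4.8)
The plan is to reduce both assertions to the single classical fact that, on the exterior algebra $\Lambda V^*$ equipped with the inner product induced by $g$ (for which the wedge monomials in $(e_1^*,\dots,e_n^*)$ form an orthonormal basis), the interior product $i_{e_j}$ is the adjoint of exterior multiplication $e_j^*\wedge\cdot$; that is, $\langle e_j^*\wedge\alpha,\beta\rangle=\langle\alpha,i_{e_j}\beta\rangle$ for all $\alpha,\beta\in\Lambda V^*$. Everything else is bilinearity bookkeeping.

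First I would use that $h\mapsto L^{a,b}_h$ is linear and that the passage to adjoints is linear (no conjugation intervenes, the scalars being real), so it suffices to prove the two statements when $h=e_i^*\otimes e_j^*$ is simple; summing over $i,j$ against the coefficients of a general $h\in\mathcal{D}^{(1,1)}(V^*)$ then yields the general case. Next, since the inner product on $\mathcal{D}(V^*)$ and the operators $L^{a,b}_h$ are bilinear in the double-form argument, it is enough to test the adjoint identities on simple double forms $\omega=\theta_1\otimes\theta_2$ and $\eta=\theta_3\otimes\theta_4$. One may also note that $L^{1,1}_h$ and $L^{-1,-1}_h$ shift the bidegree by $(1,1)$ and $(-1,-1)$ respectively, while $L^{1,-1}_h$ and $L^{-1,1}_h$ shift it by $(1,-1)$ and $(-1,1)$; since the inner product vanishes between distinct bidegrees, the adjoint relations only have to be checked componentwise, which the reduction to simple forms takes care of automatically.

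For (1), with $h=e_i^*\otimes e_j^*$ one computes, using the tensor-product definition of the inner product on $\mathcal{D}(V^*)$ and then the exterior/interior adjunction on each $\Lambda V^*$ factor,
\[
\langle L^{1,1}_h(\theta_1\otimes\theta_2),\,\theta_3\otimes\theta_4\rangle
=\langle e_i^*\wedge\theta_1,\theta_3\rangle\,\langle e_j^*\wedge\theta_2,\theta_4\rangle
=\langle\theta_1,i_{e_i}\theta_3\rangle\,\langle\theta_2,i_{e_j}\theta_4\rangle,
\]
and the right-hand side is exactly $\langle\theta_1\otimes\theta_2,\,L^{-1,-1}_h(\theta_3\otimes\theta_4)\rangle$. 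For (2) one keeps the first tensor factor as an exterior multiplication and converts the second into an interior multiplication, obtaining
\[
\langle L^{1,-1}_h(\theta_1\otimes\theta_2),\,\theta_3\otimes\theta_4\rangle
=\langle e_i^*\wedge\theta_1,\theta_3\rangle\,\langle i_{e_j}\theta_2,\theta_4\rangle
=\langle\theta_1,i_{e_i}\theta_3\rangle\,\langle\theta_2,e_j^*\wedge\theta_4\rangle
=\langle\theta_1\otimes\theta_2,\,L^{-1,1}_h(\theta_3\otimes\theta_4)\rangle.
\]

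I do not anticipate a genuine obstacle: the argument is essentially formal once the exterior/interior adjunction on $\Lambda V^*$ is granted. The only places that call for a little care are making the two bilinearity reductions cleanly (in $h$ and in the arguments) and keeping track of the bidegree shifts, so that one is really comparing operators between matching graded components of $\mathcal{D}(V^*)$.
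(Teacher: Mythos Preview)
Your proposal is correct and follows exactly the same approach as the paper, which simply states that the result is straightforward and follows directly from the fact that the interior product by a vector $v$ is the adjoint of exterior multiplication by $v^*$. You have merely spelled out the bilinearity reductions and the componentwise computation that the paper leaves implicit.
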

\begin{proof}
Straightforward, it results directly  from the fact that the interior product by a vector $v$ is the adjoint of the exterior multiplication  by $v^*$.
\end{proof}
The following proposition shows that the previous maps are the basic maps of the algebra of curvature structures as defined in \cite{Kulkarni}
\begin{proposition}\label{Bianchi-sum}
Let   $(V,g)$ be an Euclidean vector space and  $\omega \in \mathcal{D}(V^*)$ an arbitrary double form then  we have
\begin{enumerate}
\item $L^{1,1}_g(\omega)=g\omega$ is the left multiplication map by $g$.
\item   $L^{-1,-1}_g(\omega)=\ccc \omega$ is the contraction map of double forms.
\item  $L^{1,-1}_g(\omega)=\mathfrak{S}\omega$ is the first Bianchi sum.
\item  $L^{-1,1}_g(\omega)=\widetilde{\mathfrak{S}}\omega$ is the adjoint  first Bianchi sum.
\end{enumerate}
\end{proposition}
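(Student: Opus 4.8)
The plan is to reduce everything to a computation on a simple double form $\omega=\theta_1\otimes\theta_2$, since all four maps $L_g^{a,b}$ as well as the four target operations ($g\,\cdot$, $\ccc$, $\mathfrak{S}$, $\widetilde{\mathfrak{S}}$) are linear; I would also note at the outset that, although the $L_g^{a,b}$ are written through a choice of orthonormal basis, they are in fact intrinsic because $g=\sum_i e_i^*\otimes e_i^*$ is. Items (1) and (2) then follow by merely comparing formulas, with no genuine computation. For (1), expanding the exterior product of $g$ with $\omega$ directly from the definition (\ref{def:prod}) gives $g\omega=\sum_i (e_i^*\wedge\theta_1)\otimes(e_i^*\wedge\theta_2)$, which is verbatim the definition of $L_g^{1,1}(\omega)$. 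For (2), the contraction of double forms is $\ccc\omega=\sum_i i_{e_i}\theta_1\otimes i_{e_i}\theta_2$, which is exactly $L_g^{-1,-1}(\omega)$; this is moreover consistent with the preceding proposition, since $\ccc$ is the metric adjoint of multiplication by $g$.

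The heart of the matter is item (3). Here I would evaluate both $L_g^{1,-1}(\omega)=\sum_i (e_i^*\wedge\theta_1)\otimes i_{e_i}\theta_2\in\mathcal{D}^{p+1,q-1}(V^*)$ and the first Bianchi sum $\mathfrak{S}\omega$ on arbitrary vectors $(x_1,\ldots,x_{p+1};y_1,\ldots,y_{q-1})$ and compare. On the left, I would use the standard wedge expansion $(e_i^*\wedge\theta_1)(x_1,\ldots,x_{p+1})=\sum_{j=1}^{p+1}(-1)^{j+1}e_i^*(x_j)\,\theta_1(x_1,\ldots,\widehat{x_j},\ldots,x_{p+1})$ together with $(i_{e_i}\theta_2)(y_1,\ldots,y_{q-1})=\theta_2(e_i,y_1,\ldots,y_{q-1})$. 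The crucial step is then to carry out the sum over the basis: interchanging the $i$-sum and the $j$-sum and applying the resolution of the identity $\sum_i e_i^*(x_j)\,\theta_2(e_i,y_1,\ldots)=\theta_2(x_j,y_1,\ldots)$ (equivalently $\sum_i e_i^*(x_j)e_i=x_j$) collapses the paired interior and exterior products into the alternating sum $\sum_{j}(-1)^{j+1}\omega(x_1,\ldots,\widehat{x_j},\ldots,x_{p+1};x_j,y_1,\ldots,y_{q-1})$, which is precisely $(\mathfrak{S}\omega)(x_1,\ldots;y_1,\ldots)$. This identifies $L_g^{1,-1}$ with $\mathfrak{S}$.

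Finally, item (4) I would deduce from item (3) rather than repeating the computation. By the preceding proposition $L_g^{-1,1}$ is the adjoint of $L_g^{1,-1}$, and $\widetilde{\mathfrak{S}}$ is by definition the adjoint first Bianchi sum, i.e.\ the adjoint of $\mathfrak{S}$; combining these with the identity $L_g^{1,-1}=\mathfrak{S}$ from item (3) gives $L_g^{-1,1}=\widetilde{\mathfrak{S}}$ at once. (Alternatively, one checks from the definitions that $L_g^{-1,1}(\omega)=\big(L_g^{1,-1}(\omega^t)\big)^t$ and uses the transpose symmetry of the Bianchi sum.) The only real obstacle throughout is bookkeeping in item (3): one must fix compatible sign conventions for the wedge expansion and for the definition of $\mathfrak{S}$ so that the signs $(-1)^{j+1}$ agree, and confirm that the single basis sum genuinely reconstructs each argument $x_j$; beyond pinning down these conventions no analytic or algebraic difficulty arises.
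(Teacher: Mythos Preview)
Your proposal is correct and follows essentially the same strategy as the paper: reduce by linearity to simple double forms and compute directly, with the key step in (3) being the collapse of the orthonormal sum $\sum_i$ (the paper does this on the form side by expanding $i_{e_i}(e_{j_1}^*\wedge\cdots\wedge e_{j_q}^*)$ with basis forms, whereas you evaluate on test vectors and use $\sum_i e_i^*(x_j)e_i=x_j$, which is the same identity in dual guise). The only noteworthy difference is in (4): the paper simply repeats the explicit computation (``in the same way one can prove that\ldots''), while your use of the adjointness from the preceding proposition is a cleaner shortcut.
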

\begin{proof}
The proof of $(1)$ is straightforward, $(2)$ reults from the fact that the contraction map is the adjoint of the multiplication map by $g$, see \cite{Labbidoubleforms}. To prove (3) we proceed as follows\\
Let $\omega=e_{i_1}^*\wedge ... \wedge e_{i_p}^*\otimes e_{j_1}^*\wedge ... \wedge e_{j_q}^*$ be a simple $(p,q)$ double form then
\begin{align*}
L^{1,-1}_g(\omega)=&\sum_{i=1}^n  e_i^*\wedge e_{i_1}^*\wedge ... \wedge e_{i_p}^*\otimes i_{e_i}e_{j_1}^*\wedge ... \wedge e_{j_q}^*\\
=&\sum_{i=1}^n  e_i^*\wedge e_{i_1}^*\wedge ... \wedge e_{i_p}^*\otimes  \sum_{k=1}^q (-1)^{q-1}\langle e_i^*,e_{j_k}^*\rangle e_{j_1}^*\wedge ... \widehat{e_{j_k}}  ... \wedge e_{j_p}^*\\
=&\sum_{i=1}^n   \sum_{k=1}^q (-1)^{q-1}\langle e_i^*,e_{j_k}^*\rangle  e_i^*\wedge e_{i_1}^*\wedge ... \wedge e_{i_p}^*\otimes  e_{j_1}^*\wedge ... \widehat{e_{j_k}}  ... \wedge e_{j_p}^*\\
=& \sum_{k=1}^q (-1)^{q-1}    e_{j_k}^*\wedge e_{i_1}^*\wedge ... \wedge e_{i_p}^*\otimes  e_{j_1}^*\wedge ... \widehat{e_{j_k}}  ... \wedge e_{j_p}^*\\
=&\mathfrak{S}\omega.
\end{align*}
In the same way one can prove that
\[ L^{-1,1}_g(\omega)=\sum_{k=1}^q (-1)^{q-1}    e_{i_1}^*\wedge ...\widehat{e_{i_k}} ... \wedge e_{i_p}^*\otimes   e_{i_k}^*\wedge e_{j_1}^*\wedge   ... \wedge e_{j_p}^*=\widetilde{\mathfrak{S}}\omega.\]
This completes the proof of the proposition.
\end{proof}
\begin{remark}
We remark  that the first Bianchi sum and its adjoint are related by the formula (which can be easily checked)
\[ \widetilde{\mathfrak{S}}(\omega)=\Big(  \mathfrak{S} (\omega^t)\Big)^t.\]
\end{remark}
As a first direct application of the previous prposition  we show that the Bianchi sum is an antiderivation as was observed in \cite{Kulkarni}
\begin{proposition}
Let  $\omega_1\in \mathcal{D}^{(p,q)}(V^*)$ and $\omega_2=\in \mathcal{D}^{(r,s)}(V^*)$ then
\begin{equation}\label{Bianchi-derivation}
\mathfrak{S}(\omega_1\omega_2)=(\mathfrak{S} \omega_1)  \omega_2+(-1)^{p+q}\omega_1\mathfrak{S}\omega_2.
\end{equation}
\end{proposition}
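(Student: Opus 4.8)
The plan is to reduce the claim to the fact that the interior product $i_{e_{i}}$ is an antiderivation of the ordinary exterior algebra $\Lambda V^{*}$, using the identity $\mathfrak{S}=L_{g}^{1,-1}$ from Proposition~\ref{Bianchi-sum} together with the defining formula $L_{g}^{1,-1}(\theta_{1}\otimes\theta_{2})=\sum_{i}(e_{i}^{*}\wedge\theta_{1})\otimes i_{e_{i}}\theta_{2}$. Since both sides of \eqref{Bianchi-derivation} are bilinear in $\omega_{1}$ and $\omega_{2}$, it is enough to prove it for simple double forms $\omega_{1}=\alpha_{1}\otimes\beta_{1}$ with $\deg\alpha_{1}=p$, $\deg\beta_{1}=q$ and $\omega_{2}=\alpha_{2}\otimes\beta_{2}$ with $\deg\alpha_{2}=r$, $\deg\beta_{2}=s$, for which $\omega_{1}\omega_{2}=(\alpha_{1}\wedge\alpha_{2})\otimes(\beta_{1}\wedge\beta_{2})$ by \eqref{def:prod}.

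First I would compute
\[
\mathfrak{S}(\omega_{1}\omega_{2})=\sum_{i=1}^{n}\big(e_{i}^{*}\wedge\alpha_{1}\wedge\alpha_{2}\big)\otimes i_{e_{i}}(\beta_{1}\wedge\beta_{2}),
\]
and expand $i_{e_{i}}(\beta_{1}\wedge\beta_{2})=(i_{e_{i}}\beta_{1})\wedge\beta_{2}+(-1)^{q}\,\beta_{1}\wedge(i_{e_{i}}\beta_{2})$, using $\deg\beta_{1}=q$. This splits $\mathfrak{S}(\omega_{1}\omega_{2})$ into two sums. By \eqref{def:prod} the first sum is $\big(\sum_{i}(e_{i}^{*}\wedge\alpha_{1})\otimes i_{e_{i}}\beta_{1}\big)(\alpha_{2}\otimes\beta_{2})=(\mathfrak{S}\omega_{1})\,\omega_{2}$. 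In the second sum I would move $e_{i}^{*}$ past $\alpha_{1}$ via $e_{i}^{*}\wedge\alpha_{1}=(-1)^{p}\,\alpha_{1}\wedge e_{i}^{*}$, which turns it into
\[
(-1)^{p+q}\sum_{i}\big(\alpha_{1}\wedge e_{i}^{*}\wedge\alpha_{2}\big)\otimes\big(\beta_{1}\wedge i_{e_{i}}\beta_{2}\big)=(-1)^{p+q}\,(\alpha_{1}\otimes\beta_{1})\Big(\sum_{i}(e_{i}^{*}\wedge\alpha_{2})\otimes i_{e_{i}}\beta_{2}\Big)=(-1)^{p+q}\,\omega_{1}\,(\mathfrak{S}\omega_{2}).
\]
Adding the two sums gives \eqref{Bianchi-derivation}, and the general case follows by bilinearity.

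The argument has no genuine obstacle; its whole content is the sign bookkeeping, and the factor $(-1)^{p+q}$ arises as the product of the $(-1)^{q}$ from the antiderivation property of $i_{e_{i}}$ and the $(-1)^{p}$ from commuting $e_{i}^{*}$ past $\alpha_{1}$. I would only note that, since $\mathfrak{S}$ maps $\mathcal{D}^{(p,q)}(V^{*})$ into $\mathcal{D}^{(p+1,q-1)}(V^{*})$ and hence preserves the total degree $p+q$, formula \eqref{Bianchi-derivation} says precisely that $\mathfrak{S}$ is an antiderivation of the bigraded exterior algebra of double forms with respect to the parity given by total degree.
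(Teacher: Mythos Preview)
Your proof is correct and follows essentially the same approach as the paper: reduce by bilinearity to simple double forms, apply the formula $\mathfrak{S}=L_g^{1,-1}$, use that $i_{e_i}$ is an antiderivation on $\Lambda V^*$, and commute $e_i^*$ past the first factor to pick up the sign $(-1)^p$. Your closing remark on $\mathfrak{S}$ being an antiderivation with respect to total-degree parity is a helpful addition not made explicit in the paper.
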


\begin{proof}
Without loss of gennerality we may assume that $\omega_1=\theta_1\otimes \theta_2\in \mathcal{D}^{(p,q)}(V^*)$ and $\omega_2=\theta_3\otimes \theta_4\in \mathcal{D}^{(r,s)}(V^*)$ . Then
\begin{align*}
\mathfrak{S}(\omega_1\omega_2)=& \mathfrak{S}(\theta_1\wedge \theta_3\otimes  \theta_2\wedge \theta_4)\\
=& \sum_{i=1}^n  e_i\wedge \theta_1\wedge \theta_3\otimes  i_{e_i}( \theta_2\wedge \theta_4)\\
=& \sum_{i=1}^n  e_i\wedge \theta_1\wedge \theta_3\otimes  \Big(  i_{e_i}( \theta_2)\wedge \theta_4) +(-1)^q \theta_2\wedge i_{e_i}(\theta_4)\Big)\\
=&  \sum_{i=1}^n \Big( e_i\wedge \theta_1\otimes i_{e_i}\theta_2\Big)(\theta_3\otimes \theta_4)+ \sum_{i=1}^n (-1)^p \theta_1\wedge e_i\wedge \theta_3\otimes (-1)^q\theta_2\wedge i_{e_i}(\theta_4)\\
=&  \sum_{i=1}^n \Big( e_i\wedge \theta_1\otimes i_{e_i}\theta_2\Big)(\theta_3\otimes \theta_4)+ \Big(\theta_1\otimes \theta_2\Big)(-1)^{p+q}\sum_{i=1}^n e_i\wedge \theta_3\otimes  i_{e_i}(\theta_4)\\
=&(\mathfrak{S} \omega_1)  \omega_2+(-1)^{p+q}\omega_1\mathfrak{S}\omega_2.
\end{align*}
\end{proof}
Recall that  the alternating operator is a  linear map $ \mathcal{D}(V^{*})\rightarrow  \Lambda(V^*)$  defined as follows
\begin{equation}\label{Alt-conv}
 {\rm Alt}(\theta_1\otimes \theta_2)=\frac{p!q!}{(p+q)!}\theta_1\wedge \theta_2,
\end{equation}
where $\theta_1\otimes \theta_2\in    {\mathcal D}^{p,q}(V^*)$.
It is clear that  ${\rm Alt}$ is a surjective map. However, it is far from being injective. The next proposition shows in particular that the kernel of  ${\rm Alt}$ contains all doublle forms satisfying the first Bianchi identity.
\begin{proposition}
Let $\omega$ be a $(p,q)$ double form then we have
\[ {\rm Alt}\omega=(-1)^{pq+\frac{q(q-1)}{2}}\frac{p!}{(p+q)!}\mathfrak{S}^q\omega.\]
where $\mathfrak{S}^q=\mathfrak{S}\circ ... \circ \mathfrak{S}$, $q$-times.
\end{proposition}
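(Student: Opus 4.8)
The plan is to isolate a single commutation identity between ${\rm Alt}$ and the first Bianchi sum $\mathfrak{S}$, and then iterate it $q$ times. The key lemma I would prove first is that for every $(p,q)$ double form $\omega$ with $q\geq 1$,
\[ {\rm Alt}(\mathfrak{S}\omega)=(-1)^p(p+1)\,{\rm Alt}(\omega).\]
By linearity it suffices to check this on a simple double form $\omega=\theta_1\otimes\theta_2$ with $\theta_1\in\Lambda^pV^*$ and $\theta_2\in\Lambda^qV^*$. By Proposition \ref{Bianchi-sum} we have $\mathfrak{S}=L^{1,-1}_g$, hence $\mathfrak{S}\omega=\sum_{i=1}^n e_i^*\wedge\theta_1\otimes i_{e_i}\theta_2\in\mathcal{D}^{p+1,q-1}(V^*)$. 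Applying the definition \eqref{Alt-conv} of ${\rm Alt}$ to this $(p+1,q-1)$ form, then pulling each $e_i^*$ past $\theta_1$ (a sign $(-1)^p$), and finally using the elementary identity $\sum_{i=1}^n e_i^*\wedge i_{e_i}\theta_2=q\,\theta_2$ for $\theta_2\in\Lambda^qV^*$ (immediate on a basis monomial), one gets ${\rm Alt}(\mathfrak{S}\omega)=(-1)^p q\,\frac{(p+1)!(q-1)!}{(p+q)!}\,\theta_1\wedge\theta_2$; comparing with ${\rm Alt}(\omega)=\frac{p!q!}{(p+q)!}\theta_1\wedge\theta_2$ and simplifying the factorials gives the lemma.

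Next I would record that ${\rm Alt}$ restricts to the identity on $\mathcal{D}^{m,0}(V^*)$ under the canonical identification $\mathcal{D}^{m,0}(V^*)\cong\Lambda^mV^*$, since the coefficient in \eqref{Alt-conv} is then $\frac{m!\,0!}{m!}=1$. Since $\mathfrak{S}$ raises the first degree by one and lowers the second by one, $\mathfrak{S}^j\omega$ has bidegree $(p+j,q-j)$, so iterating the lemma on $\mathfrak{S}^0\omega,\dots,\mathfrak{S}^{q-1}\omega$ yields
\[ {\rm Alt}(\mathfrak{S}^q\omega)=\Big(\prod_{j=0}^{q-1}(-1)^{p+j}(p+j+1)\Big)\,{\rm Alt}(\omega)=(-1)^{pq+\frac{q(q-1)}{2}}\,\frac{(p+q)!}{p!}\,{\rm Alt}(\omega),\]
where I have evaluated the telescoping exponent $\sum_{j=0}^{q-1}(p+j)=pq+\frac{q(q-1)}{2}$ and the product $(p+1)(p+2)\cdots(p+q)=\frac{(p+q)!}{p!}$. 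Because $\mathfrak{S}^q\omega$ has bidegree $(p+q,0)$, the left-hand side equals $\mathfrak{S}^q\omega$ itself; solving for ${\rm Alt}(\omega)$ and using that $(-1)^{pq+q(q-1)/2}$ is its own inverse gives the proposition, the degenerate case $q=0$ being the tautology ${\rm Alt}(\omega)=\omega$.

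Essentially all the content is in the commutation lemma, and its proof is a short, standard computation; the normalization on $\mathcal{D}^{m,0}$ and the iteration are pure bookkeeping. The one point I would watch is the bidegree tracking in the iteration: the "$p$" appearing in the lemma is not fixed but increases by one at each application of $\mathfrak{S}$, which is precisely what turns a would-be power of $(p+1)$ into $\frac{(p+q)!}{p!}$ and a constant sign into $(-1)^{pq+q(q-1)/2}$.
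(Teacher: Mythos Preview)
Your proof is correct. The paper takes a more direct route: on a simple $(p,q)$ form $\omega=\theta_1\otimes\theta_2$ it computes $\mathfrak{S}^q\omega$ in a single step as
\[
\mathfrak{S}^q\omega=\sum_{i_1,\dots,i_q}e_{i_1}^*\wedge\cdots\wedge e_{i_q}^*\wedge\theta_1\otimes i_{e_{i_q}\wedge\cdots\wedge e_{i_1}}\theta_2,
\]
observes that since $\theta_2$ has degree $q$ the iterated interior product collapses to the scalar $\langle e_{i_q}^*\wedge\cdots\wedge e_{i_1}^*,\theta_2\rangle$, and then uses the expansion $\sum\langle e_I^*,\theta_2\rangle e_I^*=q!\,\theta_2$ (sum over ordered tuples) to obtain $q!(-1)^{pq+q(q-1)/2}\theta_1\wedge\theta_2$ in one line.

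Your argument instead isolates the one-step commutation lemma ${\rm Alt}(\mathfrak{S}\omega)=(-1)^p(p+1)\,{\rm Alt}(\omega)$, whose core is the degree-operator identity $\sum_i e_i^*\wedge i_{e_i}=q\cdot{\rm id}$ on $\Lambda^qV^*$, and then iterates with careful bidegree tracking. The content is the same linear algebra reorganized: the paper bundles all $q$ applications into one iterated interior product, while you unroll them and telescope. Your version is a bit more bookkeeping but has the virtue that the commutation lemma is a clean standalone statement, and you never have to manipulate a $q$-fold interior product; the paper's version is shorter but less modular.
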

\begin{proof}
By linearity we may assume that $\omega=\theta_1\otimes \theta_2$ is a decomposed $(p,q)$ double form. Using proposition \ref{Bianchi-sum} we get
\begin{align*}
\mathfrak{S}^p\omega &=\Big( L_g^{1,-1}\Big)^p(\omega)\\
&= \sum_{i_1,...,i_q=1}^n  e_{i_1}^*\wedge ... \wedge e_{i_q}^*\wedge \theta_1\otimes i_{e_{i_q}\wedge ...\wedge e_{i_1}}\theta_2\\
&= \sum_{i_1,...,i_q=1}^n  e_{i_1}^*\wedge ... \wedge e_{i_q}^*\wedge \theta_1\otimes    \langle e_{i_q}^*\wedge ...\wedge e_{i_1}^* , \theta_2\rangle\\
&= \sum_{i_1,...,i_q=1}^n   \langle e_{i_q}^*\wedge ...\wedge e_{i_1}^* , \theta_2\rangle e_{i_1}^*\wedge ... \wedge e_{i_q}^*\wedge \theta_1\\
&= (-1)^{\frac{q(q-1)}{2}}  \sum_{i_1,...,i_q=1}^n   \langle e_{i_1}^*\wedge ...\wedge e_{i_q}^* , \theta_2\rangle e_{i_1}^*\wedge ... \wedge e_{i_q}^*\wedge \theta_1\\
&=q!  (-1)^{\frac{q(q-1)}{2}}\theta_2\wedge \theta_1=q!  (-1)^{pq+\frac{q(q-1)}{2}}\theta_1\wedge \theta_2= (-1)^{pq+\frac{q(q-1)}{2}}      \frac{(p+q)!}{p!} {\rm Alt}(\omega).
\end{align*}
\end{proof}

The next proposition shows that the operator ${\rm Alt}$ defines an exterior algebra endomorphism

\begin{proposition}\label{Alt-is-endom}
Let  $\omega_1\in \mathcal{D}^{(p,q)}(V^*)$,  $\omega_2\in \mathcal{D}^{(r,s)}(V^*)$ then $\omega_1\omega_2\in  \mathcal{D}^{(p+r,q+s)}(V^*)$  and
\begin{equation}\label{Alt-endomorphism}
\frac{(p+r+q+s)!}{(p+r)!(q+s)!}{\rm Alt} (\omega_1\omega_2)=(-1)^{qr}\Big(\frac{(p+q)!}{p!q!}{\rm Alt} \omega_1\Big)\wedge  \Big(\frac{(r+s)!}{r!s!}{\rm Alt} \omega_2\Big).
\end{equation}
In particular, if  ${\rm Alt}(\omega_1)=0$ for some double form $\omega_1$, then ${\rm Alt}(\omega_1\omega_2)=0$ for any double form $\omega_2$.
\end{proposition}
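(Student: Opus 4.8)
The plan is to reduce to decomposable double forms and then simply unwind the definition \eqref{Alt-conv} of the alternating operator, after which \eqref{Alt-endomorphism} becomes a pure statement about the graded-commutativity of the wedge product on $\Lambda V^{*}$. Since both sides of \eqref{Alt-endomorphism} are bilinear in $\omega_1$ and in $\omega_2$ separately, it suffices to treat the case $\omega_1=\theta_1\otimes\theta_2$ with $\theta_1\in\Lambda^{p}V^{*}$, $\theta_2\in\Lambda^{q}V^{*}$, and $\omega_2=\theta_3\otimes\theta_4$ with $\theta_3\in\Lambda^{r}V^{*}$, $\theta_4\in\Lambda^{s}V^{*}$.

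By the definition \eqref{def:prod} of the exterior product of double forms, $\omega_1\omega_2=(\theta_1\wedge\theta_3)\otimes(\theta_2\wedge\theta_4)\in\mathcal{D}^{(p+r,q+s)}(V^{*})$, so applying \eqref{Alt-conv} gives
\[
\frac{(p+r+q+s)!}{(p+r)!(q+s)!}\,{\rm Alt}(\omega_1\omega_2)=(\theta_1\wedge\theta_3)\wedge(\theta_2\wedge\theta_4).
\]
On the other hand, again by \eqref{Alt-conv}, one has $\tfrac{(p+q)!}{p!q!}{\rm Alt}\,\omega_1=\theta_1\wedge\theta_2$ and $\tfrac{(r+s)!}{r!s!}{\rm Alt}\,\omega_2=\theta_3\wedge\theta_4$, so the right-hand side of \eqref{Alt-endomorphism} equals $(-1)^{qr}(\theta_1\wedge\theta_2)\wedge(\theta_3\wedge\theta_4)$. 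Thus \eqref{Alt-endomorphism} is equivalent to the identity $\theta_1\wedge\theta_3\wedge\theta_2\wedge\theta_4=(-1)^{qr}\theta_1\wedge\theta_2\wedge\theta_3\wedge\theta_4$ in $\Lambda V^{*}$, which holds because moving the $q$-form $\theta_2$ past the $r$-form $\theta_3$ produces exactly the Koszul sign, $\theta_3\wedge\theta_2=(-1)^{qr}\theta_2\wedge\theta_3$. This settles the main formula, and the ``in particular'' clause is then immediate: if ${\rm Alt}(\omega_1)=0$ (for instance when $\omega_1$ is homogeneous of a fixed bidegree, which is the situation in all the applications to curvature), the right-hand side of \eqref{Alt-endomorphism} vanishes identically, hence ${\rm Alt}(\omega_1\omega_2)=0$ for every $\omega_2$.

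I do not expect any substantial obstacle here: the only thing requiring care is the bookkeeping of the factorial normalizations in \eqref{Alt-conv} together with the sign $(-1)^{qr}$. Once the definitions are substituted, the statement collapses to graded-commutativity of $\wedge$, which is precisely why the twisting factor $(-1)^{qr}$ is forced upon us and why ${\rm Alt}$ behaves as a homomorphism only with respect to the graded-commutative structure on $\Lambda V^{*}$.
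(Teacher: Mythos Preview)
Your proof is correct and follows essentially the same approach as the paper: reduce by bilinearity to simple tensors $\omega_1=\theta_1\otimes\theta_2$, $\omega_2=\theta_3\otimes\theta_4$, apply the definition \eqref{Alt-conv} of ${\rm Alt}$, and use the graded commutativity $\theta_3\wedge\theta_2=(-1)^{qr}\theta_2\wedge\theta_3$ to produce the sign. The paper's argument is line-for-line the same computation.
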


\begin{proof}
Without loss of gennerality we may assume that $\omega_1=\theta_1\otimes \theta_2\in \mathcal{D}^{(p,q)}(V^*)$ and $\omega_2=\theta_3\otimes \theta_4\in \mathcal{D}^{(r,s)}(V^*)$ . Then
\begin{align*}
{\rm Alt} (\omega_1\omega_2)=&{\rm Alt} (\theta_1\wedge \theta_3\otimes  \theta_2\wedge \theta_4)\\
=&\frac{(p+r)!(q+s)!}{(p+r+q+s)!}\theta_1\wedge \theta_3\wedge \theta_2\wedge \theta_4\\
=& \frac{(p+r)!(q+s)!}{(p+r+q+s)!}(-1)^{qr}\theta_1\wedge \theta_2\wedge \theta_3\wedge \theta_4\\
=& (-1)^{qr} \frac{(p+r)!(q+s)!}{(p+r+q+s)!}\frac{(p+q)!(r+s)!}{p!q!r!s!}  {\rm Alt} \omega_1\ \wedge  {\rm Alt} \omega_2.
\end{align*}
\end{proof}

We shall say that a double form $\omega$  satisfies the first Bianchi identity if   $\mathfrak{S}\omega=0$.  In the next proposition we use Bianchi first sum to  reformulate  the classical Pl\"ucker relations, called also the Grassman quadratic $p$-relations
\begin{proposition}
A $p$-form $\alpha\in \Lambda V^*$ is decomposable if and only if the double form $\alpha\otimes \alpha  \in   {\mathcal D}^{p,p}(V^*)$ satisfies the first Bianchi identity.
\end{proposition}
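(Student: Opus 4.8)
The plan is to prove the two implications separately: ``$\alpha$ decomposable $\Rightarrow$ first Bianchi identity'' is formal, using that $\mathfrak S$ is an antiderivation, whereas the converse is a reformulation of the classical Grassmann quadratic relations, which I would then establish by induction. For the easy direction, write $\alpha=\theta_1\wedge\cdots\wedge\theta_p$ with $\theta_j\in V^*$; by \eqref{def:prod} one has the factorization $\alpha\otimes\alpha=(\theta_1\otimes\theta_1)(\theta_2\otimes\theta_2)\cdots(\theta_p\otimes\theta_p)$ in the double exterior algebra, each factor lying in $\mathcal D^{1,1}(V^*)$. By Proposition~\ref{Bianchi-sum}(3),
\[
\mathfrak S(\theta_j\otimes\theta_j)=\sum_i (e_i^*\wedge\theta_j)\otimes i_{e_i}\theta_j=\Big(\sum_i\langle e_i^*,\theta_j\rangle\,e_i^*\Big)\wedge\theta_j\otimes 1=(\theta_j\wedge\theta_j)\otimes 1=0 .
\]
Since $\mathfrak S$ obeys the antiderivation rule \eqref{Bianchi-derivation}, an induction on the number of factors shows that $\mathfrak S$ annihilates any product whose factors it each annihilates; hence $\mathfrak S(\alpha\otimes\alpha)=0$.

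For the converse, start again from Proposition~\ref{Bianchi-sum}(3): $\mathfrak S(\alpha\otimes\alpha)=\sum_i(e_i^*\wedge\alpha)\otimes(i_{e_i}\alpha)\in\mathcal D^{p+1,p-1}(V^*)$. This double form vanishes if and only if, for every $\xi\in\Lambda^{p-1}V^*$, the contraction of its second tensor slot against $\xi$ is zero, that is, $\sum_i\langle i_{e_i}\alpha,\xi\rangle\,(e_i^*\wedge\alpha)=0$. Since $i_{e_i}$ is the adjoint of exterior multiplication by $e_i^*$ we have $\langle i_{e_i}\alpha,\xi\rangle=\langle\alpha,e_i^*\wedge\xi\rangle$, whence $\sum_i\langle i_{e_i}\alpha,\xi\rangle\,e_i^*=\pm\,i_\xi\alpha$, with $i_\xi\alpha$ the contraction of $\alpha$ by the metric dual of $\xi$; pulling $\wedge\alpha$ out of the sum, the displayed condition becomes $(i_\xi\alpha)\wedge\alpha=0$. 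Thus $\mathfrak S(\alpha\otimes\alpha)=0$ is equivalent to the Grassmann quadratic relations $(i_\xi\alpha)\wedge\alpha=0$ for all $\xi\in\Lambda^{p-1}V^*$.

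It then remains to prove the classical fact that this system of relations forces $\alpha$ to be decomposable, which I would do by induction on $p$; the cases $p\le 1$ and $\alpha=0$ are trivial. If $\alpha\neq0$, choose $\xi$ with $\beta:=i_\xi\alpha\neq0$; then $\beta\wedge\alpha=0$, and since a nonzero $1$-form divides every form it annihilates under $\wedge$ we may write $\alpha=\beta\wedge\alpha'$. Picking an orthonormal basis with $\beta$ proportional to $e_1^*$, we may take $\alpha'\in\Lambda^{p-1}\mathrm{span}(e_2^*,\dots,e_n^*)$; substituting $\xi'=e_1\wedge\eta$ with $\eta\in\Lambda^{p-2}\mathrm{span}(e_2,\dots,e_n)$ into the relations for $\alpha$ and using $i_{e_1}\alpha'=0$ gives $(i_\eta\alpha')\wedge\alpha'=0$ for all such $\eta$, so the induction hypothesis applies to $\alpha'$ and $\alpha=\beta\wedge\alpha'$ is decomposable. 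The one place that needs genuine care is this inductive step — verifying that the divisor $\alpha'$ again satisfies the Grassmann relations — together with the sign bookkeeping of the second paragraph, which is harmless once a fixed convention for $i_\xi$ on $\Lambda^{p-1}V^*$ is chosen; the rest is purely formal.
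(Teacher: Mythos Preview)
Your argument is correct. Both implications are sound: the factorization $\alpha\otimes\alpha=(\theta_1\otimes\theta_1)\cdots(\theta_p\otimes\theta_p)$ together with the antiderivation rule~\eqref{Bianchi-derivation} handles the forward direction cleanly, and your reduction of $\mathfrak S(\alpha\otimes\alpha)=0$ to the relations $(i_\xi\alpha)\wedge\alpha=0$ for all $\xi\in\Lambda^{p-1}V^*$, followed by the inductive proof that these force decomposability, is correct as written (the sign ambiguities you flag are indeed harmless, and the verification that the quotient $\alpha'$ again satisfies the Grassmann relations goes through as you sketch).

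The paper takes a shorter and less self-contained route. It simply evaluates $\mathfrak S(\alpha\otimes\alpha)$ on basis vectors to obtain the coordinate expression
\[
\sum_{k=1}^{p+1}(-1)^k\,\alpha(e_{i_1},\ldots,\widehat{e_{i_k}},\ldots,e_{i_{p+1}})\,\alpha(e_{i_k},e_{j_1},\ldots,e_{j_{p-1}}),
\]
recognizes this as the classical Pl\"ucker (Grassmann quadratic) relations, and stops there, taking the equivalence of those relations with decomposability as known. So the paper proves only the \emph{reformulation}, while you supply a complete proof of both directions. Your forward argument via the antiderivation property is also genuinely different from the paper's component computation; it has the advantage of being coordinate-free and of making transparent why decomposable tensors automatically satisfy the Bianchi identity. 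The cost is length: if one is willing to quote the Pl\"ucker characterization, the paper's two-line identification suffices.
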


\begin{proof}
Let $(e_1,...,e_n)$ be an orthonormal basis of $V$. We look at double forms as multilinear forms on $V$ and we use an alternative form of the first Bianchi sum as follows
\begin{align*}
\mathfrak{S}(\alpha\otimes \alpha)& \big(e_{i_1},...,e_{i_{p+1}};e_{j_1},...,e_{j_{p-1}}\big)\\
&=\sum_{k=1}^{p+1}(-1)^k (\alpha\otimes \alpha) \big(e_{i_1}, ... \widehat{e_{i_k}},...,e_{i_{p+1}};e_{i_k},e_{j_1},...,e_{j_{p-1}}\big)\\
&=\sum_{k=1}^{p+1}(-1)^k \alpha\big(e_{i_1}, ... \widehat{e_{i_k}},...,e_{i_{p+1}}\big) \alpha \big(e_{i_k},e_{j_1},...,e_{j_{p-1}}\big).
\end{align*}
\end{proof}
The following lemma will be useful in proving some results of the next section.
\begin{lemma}\label{Altgomega}
\begin{enumerate}
\item
Let  $r,p\geq 1$, $q\geq 0$  and $\omega_1\in   {\mathcal D}^{p,q}(V^*)$, $\omega_2\in   {\mathcal D}^{r-1,p-1}(V^*)$  be two double forms then 
\begin{equation*}
 {\rm Alt}(\omega_1\circ g\omega_2)=(-1)^{r-1}\frac{q+1}{r}{\rm Alt}\Big( \widetilde{\mathfrak{S}}(\omega_1)\circ \omega_2\Big).
\end{equation*}
In particular, if $\omega_1^t$ satisfies the first Bianchi identity then $ {\rm Alt}(\omega_1\circ g\omega_2)=0$.
\item   Let  $q,s\geq 1$, $p\geq 0$  and $\omega_1\in   {\mathcal D}^{p,q}(V^*)$, $\omega_2\in   {\mathcal D}^{q-1,s-1}(V^*)$  be two double forms then 
\begin{equation*}
 {\rm Alt}(g\omega_2\circ \omega_1)=(-1)^{p}\frac{p+1}{s}{\rm Alt}\Big( \omega_2   \circ  \mathfrak{S}(\omega_1)\Big).
\end{equation*}
In particular, if $\omega_1$ satisfies the first Bianchi identity then $ {\rm Alt}(g\omega_2\circ \omega_1)=0$.
\end{enumerate}
\end{lemma}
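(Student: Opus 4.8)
The plan is to prove the two statements in Lemma \ref{Altgomega} in parallel, deriving (2) from (1) by transposition, so the real work is concentrated in part (1). For part (1), I would first reduce to decomposable double forms $\omega_1 = \theta_1 \otimes \theta_2$ with $\theta_1 \in \Lambda^p V^*$, $\theta_2 \in \Lambda^q V^*$, and $\omega_2 = \theta_3 \otimes \theta_4$ with $\theta_3 \in \Lambda^{r-1} V^*$, $\theta_4 \in \Lambda^{p-1} V^*$, since all four operations (${\rm Alt}$, $\circ$, left multiplication by $g$, and $\widetilde{\mathfrak{S}}$) are linear. The key computational tool is the identification $\widetilde{\mathfrak{S}} = L_g^{-1,1}$ from Proposition \ref{Bianchi-sum}, together with the explicit formula $L_g^{-1,1}(\theta_1 \otimes \theta_2) = \sum_i i_{e_i}\theta_1 \otimes e_i^* \wedge \theta_2$.

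\textbf{Main steps.} First I would write out $g\omega_2 = \sum_k e_k^* \wedge \theta_3 \otimes e_k^* \wedge \theta_4 \in \mathcal{D}^{r, p}(V^*)$ and then compute the composition product $\omega_1 \circ g\omega_2$; by the definition of $\circ$, only the pairing $\langle \theta_1, e_k^* \wedge \theta_4 \rangle$ survives, giving a sum of terms of the form $\langle \theta_1, e_k^* \wedge \theta_4 \rangle \, (e_k^* \wedge \theta_3) \otimes \theta_2$, which lies in $\mathcal{D}^{r,q}(V^*)$. Applying ${\rm Alt}$ turns this into $\frac{r!q!}{(r+q)!}$ times $\sum_k \langle \theta_1, e_k^* \wedge \theta_4\rangle \, e_k^* \wedge \theta_3 \wedge \theta_2$. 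On the other side, $\widetilde{\mathfrak{S}}(\omega_1) = \sum_i i_{e_i}\theta_1 \otimes e_i^* \wedge \theta_2 \in \mathcal{D}^{p-1, q+1}(V^*)$, and composing with $\omega_2 = \theta_3 \otimes \theta_4$ kills everything except $\langle i_{e_i}\theta_1, \theta_4\rangle \, \theta_3 \otimes e_i^*\wedge\theta_2 \in \mathcal{D}^{r-1, q+1}(V^*)$; applying ${\rm Alt}$ gives $\frac{(r-1)!(q+1)!}{(r+q)!}$ times $\sum_i \langle i_{e_i}\theta_1, \theta_4\rangle\, \theta_3 \wedge e_i^* \wedge \theta_2$. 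The heart of the proof is then the adjunction identity $\langle \theta_1, e_k^*\wedge\theta_4\rangle = \langle i_{e_k}\theta_1, \theta_4\rangle$, which matches the two sums term by term; collecting the combinatorial prefactors $\frac{r!q!}{(r+q)!}$ versus $\frac{(r-1)!(q+1)!}{(r+q)!}$ produces exactly the ratio $\frac{q+1}{r}$, and the reordering $e_k^*\wedge\theta_3\wedge\theta_2$ versus $\theta_3\wedge e_k^*\wedge\theta_2$ introduces the sign $(-1)^{r-1}$ since $\theta_3$ has degree $r-1$.

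\textbf{Deducing part (2).} For (2) I would apply the transposition operation $\omega \mapsto \omega^t$, which satisfies $(\omega_1\omega_2)^t = \omega_1^t \omega_2^t$, $(\omega_1 \circ \omega_2)^t = \omega_2^t \circ \omega_1^t$, $g^t = g$, ${\rm Alt}(\omega^t) = \pm {\rm Alt}(\omega)$ up to a controllable sign (indeed ${\rm Alt}(\theta_1\otimes\theta_2)$ and ${\rm Alt}(\theta_2\otimes\theta_1)$ differ by $(-1)^{pq}$), and $(\mathfrak{S}\omega^t)^t = \widetilde{\mathfrak{S}}\omega$ as recorded in the Remark. Transposing the identity in (1) applied to $\omega_1^t$ and $\omega_2^t$ then yields (2) after bookkeeping the signs; the ratio $\frac{q+1}{r}$ becomes $\frac{p+1}{s}$ under the degree relabeling forced by transposition. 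The last assertions in both parts are immediate: if $\omega_1^t$ (resp.\ $\omega_1$) satisfies the first Bianchi identity, then $\widetilde{\mathfrak{S}}(\omega_1) = 0$ (resp.\ $\mathfrak{S}(\omega_1)=0$) and the right-hand side vanishes.

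\textbf{Expected obstacle.} I expect the only real friction to be the sign and combinatorial bookkeeping — keeping track of the degree shifts as one moves a $1$-form past $\theta_3$, correctly normalizing the ${\rm Alt}$ prefactors when the bidegrees change from $(p,q)$ to $(r,q)$ versus $(r-1,q+1)$, and tracking the $(-1)^{pq}$-type signs through the transposition step in part (2). The underlying mathematics (adjunction between $i_{e_k}$ and $e_k^*\wedge$, which was already invoked in the proof of the earlier adjointness proposition) is elementary; the risk is purely arithmetic.
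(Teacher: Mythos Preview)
Your proposal is correct and follows essentially the same approach as the paper: reduce by linearity to simple double forms, expand $\omega_1\circ g\omega_2$ and $\widetilde{\mathfrak{S}}(\omega_1)\circ\omega_2$ using the definitions, match the scalar coefficients via the adjunction $\langle \theta_1,e_k^*\wedge\theta_4\rangle=\langle i_{e_k}\theta_1,\theta_4\rangle$, and then compare ${\rm Alt}(e_k^*\wedge\theta_3\otimes\theta_2)$ with ${\rm Alt}(\theta_3\otimes e_k^*\wedge\theta_2)$ to read off the sign $(-1)^{r-1}$ and the ratio $\frac{q+1}{r}$. The only difference is that for part (2) the paper simply says ``the proof is identical'' (i.e.\ rerun the same computation with the factors swapped), whereas you derive (2) from (1) by transposition using $(\omega_1\circ\omega_2)^t=\omega_2^t\circ\omega_1^t$ and $\widetilde{\mathfrak{S}}\omega=(\mathfrak{S}\omega^t)^t$; this is a perfectly valid and slightly more economical variant.
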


\begin{proof}
Let $\omega_1=\theta_1\otimes \theta_2\in   {\mathcal D}^{p,q}(V^*)$ and  $\omega_2=\theta_3\otimes \theta_4\in   {\mathcal D}^{r-1,p-1}(V^*)$.\\
From one side, using the definition of the composition product of double forms  we have,
\[ \omega_1\circ g\omega_2=\big(\theta_1\otimes \theta_2\big) \circ  \sum_{i=1}^ne_i^*\wedge \theta_3\otimes e_i^*\wedge \theta_4= \sum_{i=1}^n\langle \theta_1,e_i^*\wedge \theta_4\rangle e_i^*\wedge \theta_3\otimes  \theta_2.\]
On the other hand, recall that the first adjoint Bianchi sum is given by
\[\widetilde{\mathfrak{S}}(\omega_1)=\sum_{i=1}^n  i_{e_i}\theta_1\otimes e_i^*\wedge \theta_2,\]
therefore,
\[ \widetilde{\mathfrak{S}}(\omega_1)\circ \omega_2= \sum_{i=1}^n\langle \theta_1,e_i^*\wedge \theta_4\rangle     \theta_3\otimes  e_i^*\wedge \theta_2.\]
To complete the proof it is enough to check that 
\[ {\rm Alt}(\theta_3\otimes e_i^*\wedge\theta_2)= (-1)^{r-1}\frac{q+1}{r}{\rm Alt}(e_i^*\wedge \theta_3\otimes \theta_2 ).\]
The proof of the second statement is identical.
\end{proof}

\section{Pontrjagin  forms and scalars}
\subsection{Pontrjagin forms}\label{Pontrjagin-forms}
\begin{definition}
Let $R$ be a $(2,2)$ double form on the Euclidean vector space $(V,g)$. The \emph{$k$-th Pontrjagin form} of $R$,  denoted $P_k(R)$, is the $4k$-form defined by

\begin{equation}\label{Stehney-formula}
P_k(R)=\frac{1}{(k!)^2(2\pi)^{2k}}{\rm Alt}\big(R^k\circ R^k\big)\in \Lambda^{4k}(V^*).
\end{equation}

\end{definition}

\begin{remark}
The previous definition is motivated by Riemannian geometry. If $(M,g)$ is a Riemannian manifold of dimension $n$, and $k$ is a positive integer such that $4k\leq n$. Let $R$ be the Riemann curvature tensor seen as a $(2,2)$ double form. In Stehney \cite{Stehney} it is showed that the differential form $P_k(R)$
is a closed differential form of degree $4k$ that represents the $k$-th pontrjagin class of $M$.  Let us note that the previous result of Stehney is a reformulation of a result originally due to  Chern \cite{Chern}.
\end{remark}

\begin{proposition}
Let $W$ denote the Weyl part of the $(2,2)$ double form $R$, then
\begin{equation}
P_k(R)=P_k(W)=\frac{1}{(k!)^2(2\pi)^{2k}}{\rm Alt}\big(W^k\circ W^k\big).
\end{equation}
In other words, the Pontrjagin forms depend only on the Weyl part of $R$.
\end{proposition}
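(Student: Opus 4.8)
The plan is to reduce the statement to the vanishing result of Lemma~\ref{Altgomega}. Recall that the curvature double form decomposes (with respect to the Ricci contraction) as an orthogonal sum
\[
R = W + g\varphi,
\]
where $W$ is the Weyl part, characterized by $\ccc W = 0$, and $g\varphi$ is the ``pure trace plus traceless Ricci'' part, $\varphi$ being a suitable $(1,1)$ double form built from the Ricci tensor and scalar curvature. The key structural fact I will exploit is that both $R$ and $W$ satisfy the first Bianchi identity, $\mathfrak{S}R = \mathfrak{S}W = 0$; since $g\varphi = R - W$, it follows that $\mathfrak{S}(g\varphi) = 0$ as well. (Here $\varphi$ being symmetric, $\mathfrak{S}(g\varphi)$ can be related to $\widetilde{\mathfrak{S}}(g\varphi)$ via the transpose remark if needed, but we will mostly need the Bianchi property of $R$ and $W$ directly.)

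The main computation is to expand $R^k = (W + g\varphi)^k$ using the binomial theorem in the commutative exterior algebra of double forms: $R^k = \sum_{j=0}^{k}\binom{k}{j} W^{k-j}(g\varphi)^j = W^k + \sum_{j=1}^{k}\binom{k}{j} W^{k-j} g^j \varphi^j$. Thus $R^k = W^k + g\,\eta$ for some double form $\eta = \sum_{j=1}^k \binom{k}{j} W^{k-j} g^{j-1}\varphi^j$, i.e.\ the ``error term'' $R^k - W^k$ is divisible by $g$ in the exterior algebra. Now plug into the Pontrjagin form:
\begin{align*}
R^k \circ R^k &= (W^k + g\eta)\circ(W^k + g\eta)\\
&= W^k\circ W^k + W^k\circ g\eta + g\eta\circ W^k + g\eta\circ g\eta.
\end{align*}
It then suffices to show that ${\rm Alt}$ kills each of the three cross/error terms. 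For the term $g\eta \circ W^k$ we apply part (2) of Lemma~\ref{Altgomega} with $\omega_1 = W^k$ (a $(2k,2k)$ double form, so $p=q=2k$) and $\omega_2 = \eta$ (a $(2k-1,?)$ double form; the bidegrees match since $g\eta$ has bidegree $(2k,\cdot)$ and must be composed with $W^k$ of bidegree $(2k,2k)$): since $\mathfrak{S}(W^k) = 0$ by Proposition~\ref{Bianchi-sum}'s antiderivation property (as $\mathfrak{S}W = 0$ forces $\mathfrak{S}W^k = 0$), the lemma gives ${\rm Alt}(g\eta\circ W^k) = 0$. Symmetrically, for $W^k\circ g\eta$ we use part (1) of Lemma~\ref{Altgomega} together with $\mathfrak{S}((W^k)^t) = \mathfrak{S}(W^k) = 0$ (as $W$, hence $W^k$, is symmetric), obtaining ${\rm Alt}(W^k\circ g\eta) = 0$.

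The remaining term $g\eta\circ g\eta$ requires one more observation: I will write $g\eta\circ g\eta$ so that part~(2) of the lemma applies with $\omega_1 = g\eta$; but $\mathfrak{S}(g\eta)$ need not vanish, so instead I note that $g\eta \circ (g\eta) = (g\eta)\circ(R^k - W^k)$, and handle $(g\eta)\circ R^k$ and $(g\eta)\circ W^k$ separately, each by part~(2) of the lemma using $\mathfrak{S}R^k = 0$ and $\mathfrak{S}W^k = 0$ respectively. Hence all three error terms lie in $\ker{\rm Alt}$, giving ${\rm Alt}(R^k\circ R^k) = {\rm Alt}(W^k\circ W^k)$ and therefore $P_k(R) = P_k(W)$. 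The main obstacle I anticipate is purely bookkeeping: verifying that the bidegrees of the various factors line up exactly as required by the hypotheses $r,p\geq 1$, $q\geq 0$ (resp.\ $q,s\geq 1$, $p\geq 0$) of Lemma~\ref{Altgomega} in each application, and confirming that divisibility of $R^k - W^k$ by $g$ really does let us present every cross term in the form $g\omega_2\circ\omega_1$ (or $\omega_1\circ g\omega_2$) with $\omega_1\in\{R^k, W^k\}$ satisfying first Bianchi — once that is set up, Proposition~\ref{Alt-is-endom} is not even needed and the vanishing is immediate from the lemma.
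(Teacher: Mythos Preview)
Your proof is correct and follows the same strategy as the paper's: decompose $R^k = W^k + gH$, expand the composition product, and kill the three cross terms with Lemma~\ref{Altgomega} using that $W^k$ satisfies the first Bianchi identity. One simplification you missed: since $g\eta = R^k - W^k$ and both $R^k$ and $W^k$ satisfy the first Bianchi identity (by the antiderivation property you already invoked), so does $g\eta$; hence ${\rm Alt}(g\eta\circ g\eta)=0$ follows directly from part~(2) of the lemma with $\omega_1=g\eta$, without your detour of splitting $g\eta$ back into $R^k - W^k$ --- this is exactly how the paper handles that term.
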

This result were first proved by Avez \cite{Avez}, Bivens \cite{Bivens}, Greub \cite{Greub-pontjagin} and Branson-Gover \cite{Branson-Gover}.

\begin{proof}
Recall  that we have the decomposition $R=W+gh$, where $h$ is the Schouten tensor. Then $R^k=W^k+gH$, for some double form $H$, and therefore
\[ R^k\circ R^k=(W^k+gH)\circ (W^k+gH)=W^k\circ W^k+W^k\circ gH+gH\circ W^k+gH\circ gH.\]
Since $W^k$ and $gH$ are both symmetric double forms and  both satisfy the first Bianchi identity, Lemma \ref{Altgomega} shows that
\[   {\rm Alt}\big(W^k\circ gH\big)= {\rm Alt}\big(gH\circ W^k\big)={\rm Alt}\big(gH\circ gH\big)=0.\]
The proposition follows then immediately.
\end{proof}
As a direct consequence of the previous proposition all the Pontrjagin classes of a conformally-flat manifold vanish. In the next theorem we are going to generalize this to more general  $q$-conformally-flat manifolds.\\
For $n\geq 4k$, following \cite{Kulkarni,Nasu},   We say that a Riemannian $n$-manifold  $(M,g)$  is  $k$-conformally flat if  its Riemann curvature tensor $R$ is such that  the double form   $R^k$ is divisible by $g$, that is $R^k=gH$ for some double form $H$ satisfying the first Bianch identity. We recover the usual conformally-flat manifolds for $k=1$.  Using  Lemma \ref{Altgomega}  one can prove easily the following
\begin{theorem}
Let  $(M,g)$ be a $k$-conformally flat Riemannian manifold of dimension  $n\geq 4k$. Then the Pontrjagin classes $P_i$ of $M$ vanish for $i\geq k$.
\end{theorem}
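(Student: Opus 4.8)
The plan is to reduce everything to Lemma~\ref{Altgomega}(2). Fix an integer $i\geq k$. If $4i>n$, then $P_i(R)$ is a differential form of degree $4i$ on the $n$-manifold $M$, hence identically zero, and there is nothing to prove; so assume $4i\leq n$. The idea is to exhibit $R^i\circ R^i$ in the form $g\omega_2\circ\omega_1$ with $\omega_1$ satisfying the first Bianchi identity, and then invoke the ``in particular'' part of Lemma~\ref{Altgomega}(2).

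First I would factor $g$ out of $R^i$. By the definition of $k$-conformal flatness we may write $R^k=gH$ for some double form $H$. Since $g\in\mathcal D^{1,1}(V^*)$ has even total bidegree, it slides past the even double form $R^{i-k}$ in the exterior algebra of double forms without a sign, so
\[
R^i=R^{i-k}R^k=R^{i-k}(gH)=g\,(R^{i-k}H)=:gH_i,\qquad H_i:=R^{i-k}H\in\mathcal D^{2i-1,2i-1}(V^*).
\]
Consequently $R^i\circ R^i=(gH_i)\circ R^i$. Writing $\omega_1:=R^i\in\mathcal D^{2i,2i}(V^*)$ and $\omega_2:=H_i\in\mathcal D^{2i-1,2i-1}(V^*)$, the bidegrees match the hypotheses of Lemma~\ref{Altgomega}(2) with $p=q=2i$ and $s=2i$, and the lemma yields
\[
{\rm Alt}\bigl(R^i\circ R^i\bigr)={\rm Alt}\bigl(gH_i\circ R^i\bigr)=(-1)^{2i}\,\frac{2i+1}{2i}\,{\rm Alt}\bigl(H_i\circ\mathfrak S(R^i)\bigr).
\]

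It then remains to observe that $R$ itself satisfies the first Bianchi identity, $\mathfrak S R=0$, so by the antiderivation property \eqref{Bianchi-derivation}, applied inductively to $R^i=R\cdot R\cdots R$, one gets $\mathfrak S(R^i)=0$ (each term of the expansion carries a factor $\mathfrak S R$). Hence the right-hand side above vanishes, ${\rm Alt}(R^i\circ R^i)=0$, and therefore $P_i(R)=\frac{1}{(i!)^2(2\pi)^{2i}}{\rm Alt}(R^i\circ R^i)=0$ by \eqref{Stehney-formula}. Since $[P_i(R)]$ represents the $i$-th Pontrjagin class, this proves the theorem.

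The content of the argument lies entirely in the factorization $R^i=gH_i$ and in matching the bidegrees so that Lemma~\ref{Altgomega}(2) applies; there is no analytic or genuinely geometric obstacle. The only point requiring a little care is the bookkeeping that divisibility of $R^k$ by $g$ propagates to $R^i$ for every $i\geq k$ — which is precisely where the hypothesis $i\geq k$ enters — together with the observation that $g$ may be moved past $R^{i-k}$ without a sign because $g$ has even total bidegree. In the extremal case $4i=n$ the same computation gives the vanishing of the corresponding Pontrjagin numbers of $M$.
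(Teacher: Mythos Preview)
Your argument is correct and follows exactly the paper's approach: write $R^i=gH_i$ and apply Lemma~\ref{Altgomega}(2) with $\omega_1=R^i$, using $\mathfrak S(R^i)=0$. The paper's own proof only spells out the case $i=k$ and leaves the extension to $i>k$ (your factorization $R^i=R^{i-k}R^k=g(R^{i-k}H)$) and the degenerate case $4i>n$ implicit, so your write-up is in fact more complete.
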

\begin{proof}
Let  $R^k=gH$, where $H$ is some symmetric double form that satisfies the first Bianchi identity, then Lemma \ref{Altgomega} shows that
\[{\rm Alt}(R^k\circ R^k)={\rm Alt}(gH\circ R^k)=0.\]
This completes the proof.
\end{proof}

\subsection{Pontrjagin-Chern scalars of double forms}
\subsubsection{The volume double form}
 
Let $(V,g)$ be an  Euclidean vector space of even dimension $n=4k$. Fix a volume form $\omega_g$ on $V$, that is choose $\omega_g\in \Lambda^n(V^*)$ with norm 1. In other words we choose an orientation on $V$. Using the inclusion
 $\Lambda^{4k}(V^*)\subset {\mathcal D}^{(2k,2k)}(V^*)$, one can look at $\omega_g$ as a $(2k,2k)$ double form. The so obtained double form, which we continue to denote by $\omega_g$, shall be called the  \emph{volume double form of $(V,g)$.}\\
It is easy to check that the operator $ \Lambda^{2k}(V^*)\rightarrow  \Lambda^{2k}(V^*)$  that is canonically associated to $\omega_g$ is nothing but the Hodge star operator $*$. In particular, for any two $2k$-vectors $\alpha,\beta$ one has
\[\omega_g(\alpha,\beta)=\langle \ast\alpha,\beta\rangle=\langle \alpha,\ast\beta\rangle=\ast (\alpha \wedge \beta).\]

We list  below some useful properties of this volume double form.
\begin{proposition}\label{mu-star}
\begin{enumerate}
\item In the composition algebra of double forms  ${\mathcal D}(V^\ast)$, the volume double form $\omega_g$ is a squre root of the unity, that is
\[\omega_g\circ\omega_g=\frac{g^{2k}}{(2k)!}.\]
\item For any two $(2k,2k)$ double forms $\omega_1$ and $\omega_2$ we have
\[\langle \omega_g\circ \omega_1,\omega_g\circ \omega_2\rangle=\langle \omega_1,\omega_2\rangle =\langle \omega_1\circ  \omega_g,\omega_2\circ \omega_g \rangle. \]
\item If $\ast$ denotes the double Hodge star operating on double forms and $\psi$ is an arbitrary  $(2k,2k)$ double form then
\[\big(\ast\psi\big)\circ \omega_g=\omega_g\circ \psi, \, {\rm and}\,\,  \psi \circ \omega_g=\omega_g\circ  \big(\ast\psi\big).\]
In particular, we have

\[\ast\psi=\omega_g\circ \psi \circ \omega_g.\]

\end{enumerate}
\end{proposition}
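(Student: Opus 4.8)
The plan is to set up once and for all the standard dictionary between $(2k,2k)$ double forms and endomorphisms of the middle exterior power $\Lambda^{2k}(V^*)$, and then to read off all three statements from two elementary facts about the Hodge star in middle degree. To a double form $\psi\in\mathcal{D}^{2k,2k}(V^*)$, written $\psi=\theta_1\otimes\theta_2$ on simple elements, I associate the operator $A_\psi\in{\rm End}\big(\Lambda^{2k}(V^*)\big)$ given by $A_\psi(\alpha)=\langle\theta_1,\alpha\rangle\,\theta_2$ and extended by linearity. A one-line check on simple double forms (using the definition of the composition product and of the inner product of double forms) shows that $\psi\mapsto A_\psi$ is a linear isomorphism that converts the composition product into operator composition, $A_{\psi\circ\phi}=A_\psi\circ A_\phi$, that sends the transpose to the adjoint, $A_{\psi^t}=A_\psi^*$, and that identifies the canonical inner product of double forms with the Hilbert--Schmidt product, $\langle\psi,\phi\rangle=\tr\big(A_\psi^*A_\phi\big)$. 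Into this dictionary I feed: (a) $A_{\omega_g}=*$, the ordinary Hodge star on $\Lambda^{2k}(V^*)$ --- this is precisely the sentence preceding the Proposition --- and since $\dim V=4k$ one has $*^2=(-1)^{2k\cdot 2k}\,{\rm id}={\rm id}$ on $\Lambda^{2k}(V^*)$, so $*$ is an orthogonal involution, hence self-adjoint and unitary; and (b) $A_{g^{2k}/(2k)!}={\rm id}$, because expanding the exterior power of $g=\sum_i e_i^*\otimes e_i^*$ gives $g^{2k}/(2k)!=\sum_{|I|=2k}e_I^*\otimes e_I^*$, the sum over increasing multi-indices $I$, which is exactly the double form representing the identity operator.

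Granting this, part (1) is immediate: $A_{\omega_g\circ\omega_g}=A_{\omega_g}\circ A_{\omega_g}=*^2={\rm id}=A_{g^{2k}/(2k)!}$, hence $\omega_g\circ\omega_g=g^{2k}/(2k)!$ by injectivity of the dictionary. For part (3) I use that the double Hodge star on $\mathcal{D}^{2k,2k}(V^*)$ is $\ast(\theta_1\otimes\theta_2)=(*\theta_1)\otimes(*\theta_2)$, which, since $*$ is self-adjoint on $\Lambda^{2k}(V^*)$, translates into $A_{\ast\psi}=*\circ A_\psi\circ *$. Then $A_{(\ast\psi)\circ\omega_g}=(*\circ A_\psi\circ *)\circ *=*\circ A_\psi=A_{\omega_g}\circ A_\psi=A_{\omega_g\circ\psi}$, so $(\ast\psi)\circ\omega_g=\omega_g\circ\psi$; and $A_{\psi\circ\omega_g}=A_\psi\circ *=*\circ(*\circ A_\psi\circ *)=A_{\omega_g}\circ A_{\ast\psi}=A_{\omega_g\circ(\ast\psi)}$, so $\psi\circ\omega_g=\omega_g\circ(\ast\psi)$. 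Composing the first of these identities on the right with $\omega_g$ and invoking part (1) gives $\ast\psi=(\ast\psi)\circ\big(g^{2k}/(2k)!\big)=(\ast\psi)\circ\omega_g\circ\omega_g=\omega_g\circ\psi\circ\omega_g$, which is the last assertion of the Proposition.

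Part (2) I would settle by the operator computation $\langle\omega_g\circ\omega_1,\omega_g\circ\omega_2\rangle=\tr\big((A_{\omega_g}A_{\omega_1})^*(A_{\omega_g}A_{\omega_2})\big)=\tr\big(A_{\omega_1}^*A_{\omega_g}^*A_{\omega_g}A_{\omega_2}\big)=\tr\big(A_{\omega_1}^*A_{\omega_2}\big)=\langle\omega_1,\omega_2\rangle$, using that $A_{\omega_g}^*A_{\omega_g}={\rm id}$; and likewise $\langle\omega_1\circ\omega_g,\omega_2\circ\omega_g\rangle=\tr\big(A_{\omega_g}^*A_{\omega_1}^*A_{\omega_2}A_{\omega_g}\big)=\tr\big(A_{\omega_1}^*A_{\omega_2}\big)$ by cyclicity of the trace together with $A_{\omega_g}A_{\omega_g}^*={\rm id}$. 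Equivalently, and without using the dictionary, one notes that $\omega_g$ is a symmetric double form ($\omega_g^t=\omega_g$, since passing between a $4k$-form and a $(2k,2k)$ double form introduces only the sign $(-1)^{2k\cdot 2k}=1$) and applies the adjointness of the composition product, $\langle\psi\circ\alpha,\beta\rangle=\langle\alpha,\psi^t\circ\beta\rangle$ and $\langle\alpha\circ\psi,\beta\rangle=\langle\alpha,\beta\circ\psi^t\rangle$, to reduce the two inner products to $\langle\omega_1,\omega_g\circ\omega_g\circ\omega_2\rangle$ and $\langle\omega_1,\omega_2\circ\omega_g\circ\omega_g\rangle$ respectively; part (1) and the fact that $g^{2k}/(2k)!$ is the two-sided unit for $\circ$ on $\mathcal{D}^{2k,2k}(V^*)$ then finish it.

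I expect the only real friction to be the bookkeeping of normalizations: one must make sure that the inclusion $\Lambda^{4k}(V^*)\hookrightarrow\mathcal{D}^{2k,2k}(V^*)$ is the one for which $A_{\omega_g}$ is literally the Hodge star (this the text asserts, so it may be taken for granted), that the factorial in $g^{2k}/(2k)!=\sum_{|I|=2k}e_I^*\otimes e_I^*$ really comes out right, and that the two middle-degree facts that the Hodge star is self-adjoint and squares to the identity on $\Lambda^{2k}(V^*)$ --- both of which use $\dim V=4k$ in an essential way --- are in force. Once the dictionary has been calibrated, each of the three claims is a single line.
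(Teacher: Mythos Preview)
Your proof is correct. The approach differs from the paper's: where you set up once the isomorphism $\psi\mapsto A_\psi$ from $\mathcal{D}^{2k,2k}(V^*)$ to ${\rm End}\big(\Lambda^{2k}(V^*)\big)$ and then read off all three statements from the abstract facts that the Hodge star in middle degree is a self-adjoint orthogonal involution, the paper instead writes down the explicit expansion $\omega_g=\sum_I e_I^*\otimes *e_I^*$ in an orthonormal basis and computes each item by hand on simple double forms. For part (2) the paper's argument is essentially your ``without the dictionary'' alternative: it factors $(\omega_g\circ\omega_1)^t\circ(\omega_g\circ\omega_2)=\omega_1^t\circ\omega_g\circ\omega_g\circ\omega_2=\omega_1^t\circ\omega_2$ using symmetry of $\omega_g$ and part (1), then takes a full contraction. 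Your operator-theoretic packaging is cleaner and makes the role of the middle-degree hypothesis more transparent (it is exactly what forces $*^2={\rm id}$ and $*^*=*$); the paper's bare-hands computation is shorter to write down but hides this structure. The calibration worries you flag in your last paragraph all work out, and the identification $A_{\omega_g}=*$ is stated by the paper just before the Proposition, so you are entitled to it.
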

\begin{proof}
Let $(e_i)$ be a positive orthonormal basis of $V$, then it is easy to show that
\[\omega_g=\sum_Ie_I^*\otimes \ast e_I^*,\]
where the index $I$ runs over all multi-indexes $i_1,...,i_{2k}$ such that $1\leq i_1<...<i_{2k}\leq n$ and the star power denotes duals as usual. Consequently,
\[\omega_g\circ\omega_g=\sum_{I,J}\langle e_I^*,\ast e_J^*\rangle e_J^*\otimes \ast e_I^*=\sum_{I}  e_I^*\otimes e_I^*=\frac{g^{2k}}{(2k)!}.\]
This completes the proof of (1).\\
Next note  that
\[  \big( \omega_g\circ \omega_1\big)^t\circ (\omega_g\circ \omega_2)=\omega_1^t\circ \omega_g\circ \omega_g\circ \omega_2=\omega_1^t\circ\omega_2.\]
Where we used the fact that $\omega_g$ is a symmetric double form. After taking full contractions of both sides  one gets the  relation in (2).\\
To prove (3), without loss of generality  let $\omega=e_I^*\otimes e_J^*$ where $I$ and $J$ are multi-indexes as above. Then $\ast \omega=\ast e_I^*\otimes \ast  e_J^*$  and
\[\big(\ast\omega\big)\circ \omega_g =\sum_K\langle \ast e_I^*,\ast  e_K ^*\rangle e_K^* \otimes \ast e_J^*  =   e_I^* \otimes \ast e_J^*.\]
On the other hand, we have
\[\omega_g\circ \omega=\sum_K \langle e_K^*, e_J^*\rangle  e_I^*\otimes \ast e_K^*=e_I^* \otimes \ast e_J^*.\]
This completes the proof of the proposition.
\end{proof}
The proof of the following properties is straightforward
\begin{proposition} If $\omega_g$ is the $(2k,2k)$ volume double form then we have
\[\ast\omega_g=\omega_g,\, \ccc\omega_g=0,\, g\omega_g=0,\, {\rm and}\, \langle \omega_g, \omega_g \rangle=1.\]
\end{proposition}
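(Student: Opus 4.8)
The plan is to run everything off the explicit expansion $\omega_g=\sum_I e_I^*\otimes\ast e_I^*$ recorded in the proof of Proposition \ref{mu-star}, where $I=(i_1<\dots<i_{2k})$ ranges over the increasing $2k$-multi-indices of $\{1,\dots,n\}$ with $n=4k$. Write $\ast e_I^*=\epsilon_I\,e_{I^c}^*$, with $\epsilon_I=\pm1$ and $I^c$ the complementary increasing multi-index. Two elementary remarks carry the whole argument: since $\dim V=4k$, the ordinary Hodge star on $\Lambda^{2k}V^*$ satisfies $\ast\ast=(-1)^{(2k)(4k-2k)}={\rm id}$; and every index $a\in\{1,\dots,n\}$ lies in exactly one of $I$, $I^c$.

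For $g\omega_g=0$ and $\ccc\omega_g=0$ I would expand both via Proposition \ref{Bianchi-sum}, which identifies left multiplication by $g$ with $L^{1,1}_g$ and the contraction with $L^{-1,-1}_g$: thus $g\omega_g=\sum_a\sum_I\bigl(e_a^*\wedge e_I^*\bigr)\otimes\bigl(e_a^*\wedge\ast e_I^*\bigr)$ and $\ccc\omega_g=\sum_a\sum_I\bigl(i_{e_a}e_I^*\bigr)\otimes\bigl(i_{e_a}\ast e_I^*\bigr)$. In the first sum, if $a\in I$ the left factor vanishes, and if $a\in I^c$ then $e_a^*\wedge\ast e_I^*=\epsilon_I\,e_a^*\wedge e_{I^c}^*=0$; so every term is zero. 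The computation for $\ccc\omega_g$ is the mirror image: $i_{e_a}e_I^*=0$ when $a\notin I$, while $i_{e_a}\ast e_I^*=\epsilon_I\,i_{e_a}e_{I^c}^*=0$ when $a\notin I^c$. Hence both vanish. (As a cross-check, $\ccc\omega_g=0$ also follows from $g\omega_g=0$ through the commutation rule $\ccc(g\psi)=g\,\ccc\psi+(n-p-q)\psi$ applied to the $(2k,2k)$ form $\psi=\omega_g$, since $n-2k-2k=0$.)

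For $\ast\omega_g=\omega_g$ I would apply the double Hodge star termwise — it acts componentwise, $\ast(\alpha\otimes\beta)=\ast\alpha\otimes\ast\beta$ — so that $\ast\bigl(e_I^*\otimes\ast e_I^*\bigr)=\ast e_I^*\otimes\ast\ast e_I^*=\epsilon_I\,e_{I^c}^*\otimes e_I^*$, using $\ast\ast={\rm id}$. Reindexing by $J=I^c$ gives $\ast\omega_g=\sum_J\epsilon_{J^c}\,e_J^*\otimes e_{J^c}^*$, while $\omega_g=\sum_J\epsilon_J\,e_J^*\otimes e_{J^c}^*$, so it remains only to check $\epsilon_{J^c}=\epsilon_J$. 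This is the single point requiring a moment's thought: from $e_J^*\wedge\ast e_J^*=\langle e_J^*,e_J^*\rangle\,\omega_g=\omega_g$ one gets $\epsilon_J\,(e_J^*\wedge e_{J^c}^*)=\omega_g$, and likewise $\epsilon_{J^c}\,(e_{J^c}^*\wedge e_J^*)=\omega_g$; since $e_{J^c}^*\wedge e_J^*=(-1)^{(2k)(2k)}e_J^*\wedge e_{J^c}^*=e_J^*\wedge e_{J^c}^*$, and this $4k$-form is nonzero, $\epsilon_{J^c}=\epsilon_J$ follows. A computation-free alternative uses Proposition \ref{mu-star}: $\ast\omega_g=\omega_g\circ\omega_g\circ\omega_g=\tfrac{g^{2k}}{(2k)!}\circ\omega_g=\omega_g$, invoking part (1) and the fact that $\tfrac{g^{2k}}{(2k)!}$ is the identity of the composition algebra on $\mathcal{D}^{2k,2k}(V^*)$.

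Finally, $\langle\omega_g,\omega_g\rangle=1$ is immediate from the choice of $\omega_g$ as a unit element of $\Lambda^{4k}(V^*)$. The only step with any content is the sign identity $\epsilon_{J^c}=\epsilon_J$ in the proof of $\ast\omega_g=\omega_g$, and even that costs a single line; everything else is a direct unwinding of the definitions, as the word ``straightforward'' preceding the statement already signals. I anticipate no real obstacle.
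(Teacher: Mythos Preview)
Your proposal is correct and is precisely the kind of direct verification the paper intends: the paper supplies no proof at all beyond the remark that it is ``straightforward'', so there is nothing to compare against beyond the obvious route through the expansion $\omega_g=\sum_I e_I^*\otimes\ast e_I^*$, which is exactly what you do. Your alternative derivation of $\ast\omega_g=\omega_g$ via Proposition~\ref{mu-star} is a nice shortcut; one small caveat worth making explicit is that the identity $\langle\omega_g,\omega_g\rangle=1$ refers to the norm in $\Lambda^{4k}(V^*)$ (as you read it), since the $(2k,2k)$ double-form norm of $\sum_I e_I^*\otimes\ast e_I^*$ is $\binom{4k}{2k}$.
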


\subsubsection{Pontrjagin-Chern scalars of double forms}
\begin{definition}
Let $R$ be a $(2,2)$ double form on an oriented  Euclidean vector space $(V,g)$ of dimension $n=4k$. The \emph{$k$-th Pontrjagin-Chern scalar} of $R$,  denoted ${p}_k(R)$, is the real number defined by

\begin{equation}
{p}_k(R)=\frac{1}{(k!)^2(2\pi)^{2k}} \langle R^k\circ R^k,\omega_g\rangle.                       
\end{equation}

Where $\omega_g$ is the volume double form of $V$.

\end{definition}

It is easy to show that
\[{p}_k(R)=\frac{1}{(k!)^2(2\pi)^{2k}} \langle{\rm Alt }\big( R^k\circ R^k\big),\omega_g\rangle=\langle P_k(R),\omega_g\rangle,           \]
where the previous inner product is the one of $4k$-forms, $\omega_g$ is the volume form of $(V,g)$ and $P_k(R)$ is the $k$-th Pontrjagin form of $R$. In particular, since $\dim \Lambda^{4k}V^*=1$  we have
\[P_k(R)=\langle P_k(R),\omega_g\rangle \omega_g=p_k(R)\omega_g.\]
The following proposition provides an alternative definition of the previous scalars
\begin{proposition}\label{prop2.5}
 The \emph{$k$-th Pontrjagin-Chern scalar} ${p}_k(R)$ is given by a full contraction as follows
\begin{equation}
{p}_k(R)=\frac{1}{(k!)^2(2\pi)^{2k}(2k)!} \cc^{2k}\big(  R^k\circ R^k\circ \omega_g\big).    
\end{equation}
\end{proposition}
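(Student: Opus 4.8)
The plan is to derive the formula from the definition of $p_k(R)$ together with one elementary identity expressing the canonical inner product of two $(p,p)$ double forms as a full contraction of their composition product. Since by definition
\[
p_k(R)=\frac{1}{(k!)^2(2\pi)^{2k}}\,\langle R^k\circ R^k,\omega_g\rangle ,
\]
and since the composition product is associative (so the triple composition $R^k\circ R^k\circ\omega_g$ is unambiguous), it suffices to establish
\[
\langle R^k\circ R^k,\omega_g\rangle=\frac{1}{(2k)!}\,\cc^{2k}\!\big(R^k\circ R^k\circ\omega_g\big).
\]

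To this end I would first prove the auxiliary identity: for any two $(p,p)$ double forms $\alpha,\beta\in\mathcal{D}^{p,p}(V^*)$,
\[
\langle\alpha,\beta\rangle=\frac{1}{p!}\,\cc^{p}\!\big(\alpha\circ\beta^{t}\big).
\]
By bilinearity of the inner product, the composition product, the transpose and the contraction, it is enough to check this on decomposable double forms $\alpha=\theta_1\otimes\theta_2$ and $\beta=\theta_3\otimes\theta_4$ with $\theta_1,\theta_2,\theta_3,\theta_4\in\Lambda^{p}V^{*}$. Then $\beta^{t}=\theta_4\otimes\theta_3$ and, straight from the definition of $\circ$, $\alpha\circ\beta^{t}=\langle\theta_1,\theta_3\rangle\,\theta_4\otimes\theta_2$. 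The only non-formal input is that the iterated contraction of a decomposable $(p,p)$ double form recovers the inner product of its two factors up to the factorial, i.e. $\cc^{p}(\theta_4\otimes\theta_2)=p!\,\langle\theta_4,\theta_2\rangle$. This is a standard property of the contraction operator $\cc=L^{-1,-1}_g$; if one wishes to prove it in place, it follows from the adjointness of $\cc$ and left multiplication by $g$ already noted above (which gives $\cc^{p}\phi=\langle\phi,g^{p}\rangle$ for $\phi\in\mathcal{D}^{p,p}(V^*)$) together with the expansion $g^{p}=p!\sum_{|I|=p}e_I^{*}\otimes e_I^{*}$ over increasing multi-indices. Substituting, $\frac{1}{p!}\cc^{p}(\alpha\circ\beta^{t})=\langle\theta_1,\theta_3\rangle\langle\theta_2,\theta_4\rangle=\langle\alpha,\beta\rangle$, and bilinearity finishes the identity.

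Finally I would apply the auxiliary identity with $p=2k$, $\alpha=R^k\circ R^k$ and $\beta=\omega_g$. Since the volume double form is symmetric, $\omega_g^{t}=\omega_g$, so the identity becomes exactly $\langle R^k\circ R^k,\omega_g\rangle=\frac{1}{(2k)!}\cc^{2k}(R^k\circ R^k\circ\omega_g)$; multiplying through by $\frac{1}{(k!)^2(2\pi)^{2k}}$ and invoking the definition of $p_k(R)$ yields the stated formula. The main (and essentially only) obstacle is the elementary contraction lemma $\cc^{p}(\theta_1\otimes\theta_2)=p!\,\langle\theta_1,\theta_2\rangle$; everything else is formal manipulation with the definitions of the composition product, the transpose, the contraction and the canonical inner product. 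If that lemma is quoted from the literature on double forms the argument is immediate, and otherwise a short induction on $p$ (or the adjointness argument above) settles it, with only the usual mild care about signs when interior products pass through wedge products.
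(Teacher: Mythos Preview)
Your proof is correct, but it reaches the conclusion by a slightly different route than the paper. The paper does not prove the general trace identity $\langle\alpha,\beta\rangle=\tfrac{1}{p!}\cc^{p}(\alpha\circ\beta^{t})$; instead it invokes the already-established Proposition~\ref{mu-star} on the volume double form, using first the isometry property $\langle\omega_1,\omega_2\rangle=\langle\omega_1\circ\omega_g,\omega_2\circ\omega_g\rangle$ and then the square-root relation $\omega_g\circ\omega_g=\tfrac{g^{2k}}{(2k)!}$ to write $\langle R^k\circ R^k,\omega_g\rangle=\langle R^k\circ R^k\circ\omega_g,\tfrac{g^{2k}}{(2k)!}\rangle$, after which the same adjointness of $\cc$ and multiplication by $g$ that you use finishes the computation. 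Your approach is more general (it yields an identity valid for arbitrary $(p,p)$ double forms, not just $\beta=\omega_g$) and self-contained; the paper's approach is shorter in context because it simply quotes properties of $\omega_g$ proved a few lines earlier. Both arguments share the same final step, and the contraction lemma $\cc^{p}(\theta_1\otimes\theta_2)=p!\,\langle\theta_1,\theta_2\rangle$ that you flag as the only substantive ingredient is exactly equivalent to what the paper uses when it pairs against $g^{2k}$.
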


\begin{proof}
It results from proposition \ref{mu-star} that
\begin{align*} {p}_k(R)=& \frac{1}{(k!)^2(2\pi)^{2k}} \langle R^k\circ R^k,\omega_g\rangle=\frac{1}{(k!)^2(2\pi)^{2k}} \langle R^k\circ R^k\circ \omega_g ,\omega_g\circ \omega_g\rangle \\
&=\frac{1}{(k!)^2(2\pi)^{2k}} \langle R^k\circ R^k\circ \omega_g , \frac{g^{2k}}{(2k)!}\rangle=\frac{1}{(k!)^2(2\pi)^{2k}(2k)!} \cc^{2k}\big(  R^k\circ R^k\circ \omega_g\big). 
\end{align*}
Where in the last step we used the fact that the contraction map $\cc$ is the adjoint of the  multiplication map by $g$, see \cite{Labbidoubleforms}.
\end{proof}
\begin{remark}
We remark that if $(e_i)$ is an orthonormal basis of $(V,g)$ and $R^k$ is seen as a bilinear form on $\Lambda^{2k}$  then
\[  \cc^{2k}\big(  R^k\circ R^k\circ \omega_g\big)=\sum_{i_1,...,i_{2k}=1}^{n}R^k\circ R^k\big(e_{i_1}\wedge ...\wedge e_{i_{2k}},\ast (e_{i_1}\wedge ...\wedge e_{i_{2k}})\big).\]
\end{remark}
For a Riemannian oriented manifold of dimension $n=4k$, let $R$ be its  Riemann curvature tensor seen as a $(2,2)$ double form.  The integral over $M$ of the function $p_k(R)$ is a \emph{topological invariant} called the $k$-th pontryagin number of $M$ and is denoted by $p_k(M)$.

\begin{proposition}\label{algebraic-thorpe}
Let $R$ be a $(2,2)$ double form on an oriented  Euclidean vector space $(V,g)$ of dimension $n=4k$. Then we have 
\begin{equation}
|{p}_k(R)| \leq \frac{1}{(k!)^2(2\pi)^{2k}} \| R^k\|^2.                       
\end{equation}
Furthermore, if equality occurs then the contraction $\cc R^k=0$.
\end{proposition}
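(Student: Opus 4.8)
The plan is to get the inequality from Cauchy--Schwarz in the double-form inner product and to settle the equality case by pushing it through the alternating operator ${\rm Alt}$ and Lemma~\ref{Altgomega}. I will assume, as is the case when $R$ is a Riemann curvature tensor (the situation of interest), that $R$ is symmetric and satisfies the first Bianchi identity, and I write $S:=R^{k}\in\mathcal{D}^{(2k,2k)}(V^{*})$; then $S^{t}=S$, and since $\mathfrak S$ is an antiderivation (the identity \eqref{Bianchi-derivation}, with $(-1)^{2+2}=1$) and $\mathfrak S R=0$, induction gives $\mathfrak S S=0$.

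For the inequality, the first step is to record and verify on simple double forms the adjunction
\[
 \langle \omega_{1}\circ\omega_{2},\,\omega_{3}\rangle=\langle\omega_{2},\,\omega_{1}^{t}\circ\omega_{3}\rangle ,
\]
an immediate consequence of the definitions of $\circ$, of the transpose, and of $\langle\cdot,\cdot\rangle$. With $\omega_{1}=\omega_{2}=S$ and $\omega_{3}=\omega_{g}$ this gives $\langle S\circ S,\omega_{g}\rangle=\langle S,\,S^{t}\circ\omega_{g}\rangle=\langle S,\,S\circ\omega_{g}\rangle$, whence by Cauchy--Schwarz and Proposition~\ref{mu-star}(2) (which yields $\|S\circ\omega_{g}\|=\|S\|$)
\[
 |p_{k}(R)|=\frac{|\langle S\circ S,\omega_{g}\rangle|}{(k!)^{2}(2\pi)^{2k}}\le\frac{\|S\|\,\|S\circ\omega_{g}\|}{(k!)^{2}(2\pi)^{2k}}=\frac{\|R^{k}\|^{2}}{(k!)^{2}(2\pi)^{2k}} .
\]

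For the equality case, if $R^{k}=0$ then $\cc R^{k}=0$ trivially, so assume $S\neq0$. Equality in Cauchy--Schwarz forces $S\circ\omega_{g}=\lambda S$ for a scalar $\lambda$, and $\lambda\neq0$ because $\|S\circ\omega_{g}\|=\|S\|\neq0$ (one can moreover pin down $\lambda=\pm1$ by composing with $\omega_{g}$ on the right and invoking $\omega_{g}\circ\omega_{g}=\tfrac{g^{2k}}{(2k)!}$, which says $R^{k}$ lies in a single eigenspace of the double Hodge star; only $\lambda\neq0$ is needed below). Now take any $\eta\in\mathcal{D}^{(2k-1,2k-1)}(V^{*})$. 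Since $\cc$ is the adjoint of multiplication by $g$, and using the adjunction once more,
\[
 \langle\cc R^{k},\eta\rangle=\langle S,\,g\eta\rangle=\lambda^{-1}\langle S\circ\omega_{g},\,g\eta\rangle=\lambda^{-1}\langle\omega_{g},\,S^{t}\circ g\eta\rangle=\lambda^{-1}\langle\omega_{g},\,R^{k}\circ g\eta\rangle .
\]
The double form $R^{k}\circ g\eta$ has precisely the shape addressed by Lemma~\ref{Altgomega}(1) (take $\omega_{1}=R^{k}$, so $(p,q)=(2k,2k)$, and $\omega_{2}=\eta$, so $r=2k$); since $\omega_{1}^{t}=R^{k}$ satisfies the first Bianchi identity, that lemma gives ${\rm Alt}(R^{k}\circ g\eta)=0$. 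Finally, $\langle\cdot,\omega_{g}\rangle$ on $\mathcal{D}^{(2k,2k)}(V^{*})$ factors through ${\rm Alt}$ — this is built into the construction of $\omega_{g}$ and is reflected in the identity $p_{k}(R)=\langle P_{k}(R),\omega_{g}\rangle$ with $P_{k}(R)=\tfrac{1}{(k!)^{2}(2\pi)^{2k}}{\rm Alt}(R^{k}\circ R^{k})$ — so $\langle\omega_{g},R^{k}\circ g\eta\rangle=0$. Hence $\langle\cc R^{k},\eta\rangle=0$ for all $\eta$, i.e. $\cc R^{k}=0$.

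I expect the real obstacle to be the equality case, and in particular the decision to route it through ${\rm Alt}$ rather than through Cauchy--Schwarz rigidity alone: one must see that $\langle\cc R^{k},\eta\rangle$ equals $\lambda^{-1}\langle\omega_{g},R^{k}\circ g\eta\rangle$, that this pairing sees only ${\rm Alt}(R^{k}\circ g\eta)$, and that Lemma~\ref{Altgomega}(1) kills the latter exactly because $R^{k}$ obeys the first Bianchi identity --- the very hypothesis that excludes cheap counterexamples (in dimension $4$, the $(2,2)$ double form whose associated endomorphism of $\Lambda^{2}$ is the projection onto self-dual bivectors attains equality but has nonzero contraction, and indeed it fails the first Bianchi identity). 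A more computational alternative would decompose $R^{k}=\sum_{i}\tfrac{g^{i}}{i!}\psi_{i}$ into its trace-free Kulkarni components and check that the summands with $i\ge1$ cannot be confined to a single eigenspace of the double Hodge star; that route also works but is messier.
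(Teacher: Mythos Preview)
Your inequality argument is essentially identical to the paper's: both move one factor of $R^{k}$ across by symmetry to get $\langle R^{k}\circ R^{k},\omega_{g}\rangle=\langle R^{k},R^{k}\circ\omega_{g}\rangle$, apply Cauchy--Schwarz, and invoke Proposition~\ref{mu-star}(2) for $\|R^{k}\circ\omega_{g}\|=\|R^{k}\|$.

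The equality case is handled by a genuinely different route. The paper proves an ad hoc lemma (Lemma~\ref{lemma-a}) to the effect that $\cc(\omega\circ\omega_{g})=\mathfrak{S}\omega\circ\omega_{g}$ for any $(2k,2k)$ double form $\omega$; applied to $\omega=R^{k}$ this gives $\cc(R^{k}\circ\omega_{g})=0$ in one stroke, and then proportionality of $R^{k}$ and $R^{k}\circ\omega_{g}$ finishes. You instead push $\langle\cc R^{k},\eta\rangle$ through the adjunction for $\circ$ to $\lambda^{-1}\langle\omega_{g},R^{k}\circ g\eta\rangle$, observe that pairing with $\omega_{g}$ sees only ${\rm Alt}$, and kill the result with Lemma~\ref{Altgomega}(1). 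Both arguments ultimately rest on $\mathfrak{S}R^{k}=0$; the paper's is shorter and self-contained, while yours recycles Lemma~\ref{Altgomega} rather than introducing a new identity, at the cost of needing the ``$\langle\omega_{g},\cdot\rangle$ factors through ${\rm Alt}$'' step (which is correct---for $\psi=\theta_{1}\otimes\theta_{2}$ one has $\langle\omega_{g},\psi\rangle=\langle\ast\theta_{1},\theta_{2}\rangle=\ast(\theta_{1}\wedge\theta_{2})$---but could be stated more explicitly). Your closing remark on the dimension-$4$ self-dual projector is a nice illustration that the Bianchi hypothesis is essential.
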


\begin{proof}

The double form $R^q$ being symmetric then we have
\[{p}_k(R)=\frac{1}{(k!)^2(2\pi)^{2k}} \langle R^k\circ R^k,\omega_g\rangle=\frac{1}{(k!)^2(2\pi)^{2k}} \langle  R^k,  R^k\circ \omega_g\rangle.   \]
 Cauchy-Schwartz inequality shows that
\[  (k!)^2(2\pi)^{2k}   |{p}_k(R)|=| \langle  R^k,  R^k\circ \omega_g \rangle| \leq  \| R^k\|      \|  R^k\circ \omega_g \|.\] 
Next,  we apply  proposition \ref{mu-star} to show that $ \| R^k\circ \omega_g\| = \|    R^k\|$ as follows.
\[   \| R^k\circ \omega_g\|^2=\langle  R^k\circ \omega_g,  R^k\circ \omega_g\rangle=\langle R^k,R^k\rangle.\]
This completes the proof of the first part of the  proposition. To prove the second part, first we use   Lemma \ref{lemma-a} below  to show that 
\[c\big(R^k\circ \omega_g\big)=\mathfrak{S}R^k \circ \omega_g=0.\]
Finally, if equality holds, then  $R^k$ must be  proportional to $R^k\circ\omega_g$ and therefore $cR^k=c\big(R^k\circ \omega_g\big)=0$.
\end{proof}
\begin{lemma}\label{lemma-a}
Let $n=2p$ and $\omega$ be a $(p,p)$ double form. Then we have
\[\cc \big(\omega\circ\omega_g\big)=\mathfrak{S}\omega \circ \omega_g.\]
Where the last $\omega_g$ is the volume double form seen as a $(p-1,p+1)$ double form.

\end{lemma}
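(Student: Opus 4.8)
The plan is to reduce to a decomposable double form and then to exploit the fact that composing a double form on the right with the volume double form amounts to applying the Hodge star to its left argument; once that is recognised the identity reduces to the classical commutation rule between the Hodge star and an interior product.

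By bilinearity it suffices to treat $\omega=\theta_1\otimes\theta_2$ with $\theta_1,\theta_2\in\Lambda^pV^*$, and throughout $n=2p$. First I would record, in an oriented orthonormal basis $(e_i)$ of $V$, the two expansions of the volume double form that are needed: as a $(p,p)$ double form $\omega_g=\sum_{|I|=p}e_I^*\otimes\ast e_I^*$ (the expansion used in the proof of Proposition~\ref{mu-star}), and, when $\omega_g$ is regarded as a $(p-1,p+1)$ double form, $\omega_g=\sum_{|I|=p-1}e_I^*\otimes\ast e_I^*$, where now $\ast e_I^*\in\Lambda^{p+1}V^*$. Feeding the first expansion into the definition of the composition product yields
\[
\omega\circ\omega_g=\sum_{|I|=p}\langle\theta_1,\ast e_I^*\rangle\,e_I^*\otimes\theta_2=\varepsilon_p\,(\ast\theta_1)\otimes\theta_2,
\]
since $\sum_{|I|=p}\langle\theta_1,\ast e_I^*\rangle e_I^*=\ast\ast\theta_1$ equals $\ast\theta_1$ up to the sign $\varepsilon_p=(-1)^{p(n-p)}$ coming from $\ast^{2}$ on $\Lambda^pV^*$. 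Thus $\circ\,\omega_g$ replaces the left slot of a double form by its Hodge dual, up to this universal sign, and applying $\cc=L^{-1,-1}_g$ (Proposition~\ref{Bianchi-sum}) gives
\[
\cc\big(\omega\circ\omega_g\big)=\varepsilon_p\sum_{i=1}^{n}\big(i_{e_i}\ast\theta_1\big)\otimes\big(i_{e_i}\theta_2\big).
\]

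For the right-hand side I would use $\mathfrak{S}\omega=L^{1,-1}_g\omega=\sum_{j=1}^{n}(e_j^*\wedge\theta_1)\otimes(i_{e_j}\theta_2)\in\mathcal{D}^{p+1,p-1}(V^*)$ and compose on the right with the volume double form in its $(p-1,p+1)$ incarnation; the same reasoning as above, with $\ast^{2}$ now acting on $\Lambda^{p-1}V^*$, produces
\[
\mathfrak{S}\omega\circ\omega_g=\varepsilon_{p-1}\sum_{j=1}^{n}\big(\ast(e_j^*\wedge\theta_1)\big)\otimes\big(i_{e_j}\theta_2\big),\qquad \varepsilon_{p-1}=(-1)^{(p-1)(n-p+1)}.
\]
Comparing the two displays, all that remains is to match $i_{e_j}\ast\theta_1$ with $\ast(e_j^*\wedge\theta_1)$, and this is exactly the classical identity which on $\Lambda^pV^*$ reads $\ast(e_j^*\wedge\theta_1)=(-1)^{p}\,i_{e_j}\ast\theta_1$. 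The lemma then follows provided the three sign contributions $\varepsilon_p$, $\varepsilon_{p-1}$ and the $(-1)^{p}$ of this last identity multiply to $+1$.

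The genuinely delicate part is precisely this sign bookkeeping, together with the structural claim (used twice) that composing a double form on the right with the volume double form of the appropriate bidegree is the partial Hodge star on its left slot; carrying it out requires being careful with the conventions for $\ast$, for the interior product, and for the inclusion $\Lambda^{2p}V^*\subset\mathcal{D}^{p-1,p+1}(V^*)$. Everything else is a routine unwinding of the definitions of $\circ$, of $\mathfrak{S}=L^{1,-1}_g$ and of $\cc=L^{-1,-1}_g$ on a decomposable element. Finally, I note that the way this lemma is used afterwards, namely to conclude $\cc(R^k\circ\omega_g)=\mathfrak{S}R^k\circ\omega_g=0$ in the proof of Proposition~\ref{algebraic-thorpe}, is insensitive to the overall sign, because $\mathfrak{S}R^k=0$ by the first Bianchi identity.
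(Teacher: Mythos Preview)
Your approach is essentially the paper's: reduce by linearity to a simple $\omega=\theta_1\otimes\theta_2$, identify right composition with $\omega_g$ as a partial Hodge star on one tensor slot, and then invoke the classical commutation $i_{e_i}\!\ast=\pm\,\ast\,(e_i^*\wedge\,\cdot\,)$. The one structural difference is that you place the star on the \emph{left} slot, obtaining $\omega\circ\omega_g=\varepsilon_p(\ast\theta_1)\otimes\theta_2$, whereas the paper writes $\omega\circ\omega_g=e_I^*\otimes\ast e_J^*$ with the star on the right slot; with the paper's convention $(\theta_1\otimes\theta_2)\circ(\theta_3\otimes\theta_4)=\langle\theta_1,\theta_4\rangle\,\theta_3\otimes\theta_2$ your placement is the one that the definition actually forces, so on this point your computation is more faithful to the stated conventions than the paper's own sketch (which silently uses the symmetry of $\omega_g$ and ends up producing $\widetilde{\mathfrak S}\omega$ rather than $\mathfrak S\omega$, a distinction that evaporates for symmetric $\omega$ such as $R^k$).

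The only genuine soft spot in your write-up is the deferred sign check: you assert that $\varepsilon_p$, $\varepsilon_{p-1}$ and the $(-1)^p$ from $\ast(e_j^*\wedge\theta_1)=(-1)^p i_{e_j}\!\ast\theta_1$ multiply to $+1$, but with the standard conventions one finds $\varepsilon_p=(-1)^p$, $\varepsilon_{p-1}=(-1)^{p-1}$, and hence an overall factor $(-1)^{p+1}$ rather than $+1$. So either one of your three sign inputs needs to be revisited (the intermediate claim $\sum_I\langle\theta_1,\ast e_I^*\rangle e_I^*=\ast\ast\theta_1$ is not literally correct; the sum is $\ast^{-1}\theta_1$), or the lemma should be read up to a global sign. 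As you yourself observe, this is harmless for the only use made of the lemma, since $\mathfrak S R^k=0$ kills the right-hand side regardless.
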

\begin{proof}
Let $(e_1,...,e_n)$ be an orthonormal basis of $V$. By linearity we may assume that $\omega =e_I^*\otimes e_J^*$, where $I,J$ two multi-indexes of length $p$. Then
\[ \omega\circ \omega_g=e_I^*\otimes \ast e_J^*.\]
Therefore,
\[\cc (\omega\circ \omega_g)=\sum_{i=1}^n i_{e_i}e_I^*\otimes i_{e_i}(\ast e_J^*).\]
A straightforward computation shows that $ i_{e_i}(\ast e_J^*)=\ast(e_i^*e_J^*)$, therefore
\[\cc (\omega\circ \omega_g)=\sum_{i=1}^n i_{e_i}e_I^*\otimes \ast(e_i^*e_J^*)=\sum_{i=1}^n( i_{e_i}e_I^*\otimes e_i^*e_J^*)\circ \omega_g=\mathfrak{S}\omega \circ \omega_g.\]

\end{proof}

\begin{remark}
If $W$ denotes the Weyl part of $R$ then in a similar way one can prove that 
\begin{equation*}
|{p}_k(R)| \leq \frac{1}{(k!)^2(2\pi)^{2k}} \| W^k\|^2.                       
\end{equation*}
\end{remark}

\section{Thorpe Manifolds}\label{Thorpe-man}
An oriented compact Riemannian manifold $(M,g)$ of dimension $n=4k$ is said to be a \emph{Thorpe manifold} if $\ast R^{k}=R^{k}.$ Where $R$ is the Riemann curvature tensor seen as a $(2,2)$ double form, $R^{2k}$ its exterior power and $\ast$ is the double Hodge star operator acting on double forms.\\
For $k=1$ we recover $4$-dimensional Einstein manifolds. For $n>4$, Thorpe manifolds are in general not Einstein ande vice versa, see for instance \cite{Kim}.\\
Recall that the Gauss-Bonnet theorem asserts that
\[ \int_Mh_{4k}\omega_g=(2\pi)^{2k}(2k)!\chi(M).\]

Where $h_{4k}$ is the  $4k$-Gauss-Bonnet curvature of $(M,g)$ which  is given by, see for instance \cite{Labbidoubleforms},
\[h_{4k}=\ast R^{2k}=\ast(R^kR^k)=\langle \ast R^k,R^k\rangle.\]
In particular, for a Thorpe manifold we have
\[h_{4k}=\|R^k\|^2\geq 0.\]
Therefore, the Euler-Poincar\'e characterestic of a compact Thorpe manifold of dimension $n=4k$ is always non-negative, and it is zero if and only if the manifold is $q$-flat. This remark can be refined as follows
\begin{theorem}\label{Thorpe-thm}
Let $(M,g)$ be a compact orientable $4k$-dimensional Thorpe manifold, then
\[\chi(M)\geq \frac{k!k!}{(2k)!}|p_k(M)|.\]
Where $p_k(M)$ is the $k$-th Pontrjagin number of $M$. Furthermore if equality holds then $M$ is $k$-Ricci flat, that is $\cc R^q=0.$
\end{theorem}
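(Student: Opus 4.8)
The plan is to derive the inequality from the pointwise estimate in Proposition \ref{algebraic-thorpe} together with the Gauss--Bonnet formula, and then to extract the equality discussion from the equality clause of that same proposition. First I would recall the two integral identities already assembled in this section: Gauss--Bonnet says $\int_M h_{4k}\,\omega_g=(2\pi)^{2k}(2k)!\,\chi(M)$ with $h_{4k}=\|R^k\|^2$ on a Thorpe manifold, and the definition of the Pontrjagin number gives $p_k(M)=\int_M p_k(R)\,\omega_g$ where $p_k(R)=\tfrac{1}{(k!)^2(2\pi)^{2k}}\langle R^k\circ R^k,\omega_g\rangle$. Proposition \ref{algebraic-thorpe} gives the pointwise bound $|p_k(R)|\le \tfrac{1}{(k!)^2(2\pi)^{2k}}\|R^k\|^2$ at every point of $M$.

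Next I would integrate: $|p_k(M)|\le \int_M |p_k(R)|\,\omega_g\le \tfrac{1}{(k!)^2(2\pi)^{2k}}\int_M\|R^k\|^2\,\omega_g=\tfrac{1}{(k!)^2(2\pi)^{2k}}\int_M h_{4k}\,\omega_g=\tfrac{(2k)!}{(k!)^2}\chi(M)$, which rearranges to $\chi(M)\ge \tfrac{k!\,k!}{(2k)!}|p_k(M)|$, as claimed. The only subtlety here is that the two pointwise objects $p_k(R)$ and $\|R^k\|^2$ need not have constant sign relations, but since $\|R^k\|^2\ge 0$ everywhere and $|p_k(R)|\le \tfrac{1}{(k!)^2(2\pi)^{2k}}\|R^k\|^2$ pointwise, the chain of inequalities above is valid regardless of the sign of $p_k(M)$.

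For the equality statement, suppose $\chi(M)=\tfrac{k!\,k!}{(2k)!}|p_k(M)|$. Then every inequality in the chain must be an equality; in particular $\int_M\big(\tfrac{1}{(k!)^2(2\pi)^{2k}}\|R^k\|^2-|p_k(R)|\big)\omega_g=0$, and since the integrand is pointwise nonnegative (by Proposition \ref{algebraic-thorpe}) and continuous, it vanishes identically. Hence equality holds in the pointwise estimate of Proposition \ref{algebraic-thorpe} at every point of $M$, and the equality clause of that proposition yields $\cc R^k=0$ pointwise, i.e. $M$ is $k$-Ricci flat. I anticipate the main obstacle to be purely expository rather than mathematical: one must make sure the normalization constants from the definitions of $p_k(R)$, $h_{4k}$, and the Gauss--Bonnet formula fit together exactly to produce the stated constant $\tfrac{k!\,k!}{(2k)!}$, and one must phrase the passage from "integral of a nonnegative continuous function is zero" to "the function vanishes identically" carefully so that the appeal to the equality clause of Proposition \ref{algebraic-thorpe} is legitimate at each point.
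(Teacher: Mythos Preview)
Your proposal is correct and follows essentially the same route as the paper: integrate the pointwise bound of Proposition~\ref{algebraic-thorpe}, identify $\|R^k\|^2$ with $h_{4k}$ via the Thorpe condition, and apply Gauss--Bonnet; for equality, deduce pointwise equality in Proposition~\ref{algebraic-thorpe} and hence $\cc R^k=0$. The only difference is that you spell out the ``nonnegative continuous integrand with zero integral vanishes'' step more explicitly than the paper does.
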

\begin{proof}
Using Proposition \ref{algebraic-thorpe}, we get
\begin{align*}
|p_k(M)|\leq  & \int_M|p_k(R)|\omega_g\\
& \leq \frac{1}{(k!)^2(2\pi)^{2k}}\int_M\|R^k\|^2\omega_g\\
&= \frac{1}{(k!)^2(2\pi)^{2k}}\int_M h_{4k}\omega_g\\
&=\frac{(2k)!}{k!k!}\chi(M).
\end{align*}
Furthermore, if equality holds then  $|p_k(R)|= \frac{1}{(k!)^2(2\pi)^{2k}} \|R^k\|^2$. Again by Proposition  \ref{algebraic-thorpe} the metric must then  be $k$-Ricci flat. This completes the proof of the theorem.

\end{proof}

We note that the first part of the Theorem was first proved by Thorpe \cite{Thorpe}. As a consequence of the second part of the previous  theorem, we get the following main  result of  \cite{Kim2}
\begin{corollary}\label{Kim}
A compact orientable manifold $M$  of dimension $8$ that is at the same time Einstein and Thorpe and satisfies $6\chi(M)=|p_2(M)|$  must be flat.
\end{corollary}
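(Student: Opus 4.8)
The plan is to run Theorem~\ref{Thorpe-thm} in its equality case and then invoke the rigidity of Einstein curvature tensors in dimension $8$. Concretely, I would specialize Theorem~\ref{Thorpe-thm} to $k=2$, $n=8$: since $\frac{k!\,k!}{(2k)!}=\frac{2!\,2!}{4!}=\frac16$, that inequality reads $6\chi(M)\ge|p_2(M)|$, so the hypothesis $6\chi(M)=|p_2(M)|$ places us exactly in its equality case, and the ``furthermore'' clause then forces $M$ to be $2$-Ricci flat, i.e.\ $\cc R^2=0$ at every point.

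Next I would propagate this identity downward by contracting three more times: $\cc^4(R^2)=\cc^3\big(\cc R^2\big)=0$. The key remark is that $\cc^4(R^2)$ is, up to a nonzero universal constant, the Gauss--Bonnet curvature $h_4=\|R\|^2-4\|\cc R\|^2+s^2$, where $s=\cc^2 R$ is the scalar curvature; this is the same sort of double-forms computation that yields $h_{4k}=\ast R^{2k}$ in Section~\ref{Thorpe-man}, only carried out off the top degree. Hence $h_4\equiv 0$ on $M$.

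Finally I would feed in the Einstein hypothesis. In dimension $8$ it says $\cc R=\frac{s}{8}g$, so $\|\cc R\|^2=\frac{s^2}{8}$ and the vanishing of $h_4$ becomes
\[0=h_4=\|R\|^2-4\cdot\tfrac{s^2}{8}+s^2=\|R\|^2+\tfrac{s^2}{2};\]
both summands are nonnegative, so $\|R\|^2\equiv 0$, i.e.\ $R\equiv 0$ and $M$ is flat.

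The single step that is not pure bookkeeping is the identification of $\cc^4(R^2)$ with a nonzero multiple of $\|R\|^2-4\|\cc R\|^2+s^2$; pinning down — or at least certifying the non-vanishing of — that combinatorial constant is where I expect to spend the effort, though it is standard in the algebra of double forms and independent of the ambient dimension. I would emphasize that the argument uses only $h_4=0$, and since $h_4\ge 0$ on any Einstein $8$-manifold with equality only at $R=0$, even the exact value of that constant is immaterial to the final deduction.
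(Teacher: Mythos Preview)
Your proposal is correct and follows the paper's proof essentially step for step: specialize Theorem~\ref{Thorpe-thm} to $k=2$ to get $\cc R^2=0$, deduce $h_4=\frac{1}{4!}\cc^4 R^2=0$, and then use that an Einstein manifold with $h_4\equiv 0$ is flat. The only difference is that the paper handles the last step by citing \cite{Labbidoubleforms} (equivalently Theorem~\ref{hyper-eins} with $k=1$), whereas you sketch the explicit verification via the quadratic expression for $h_4$; your hedging on the exact combinatorial constant is appropriate and, as you note, immaterial to the conclusion.
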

\begin{proof}
From one hand,  the previous Theorem \ref{Thorpe-thm}, once applied to the case $k=2$ shows that $\cc R^2=0$. Consequently, the second Gauss-Bonnet curvature  satisfies  $h_4=\frac{1}{4!}\cc^4 R^2=\frac{1}{4!}\cc^3(\cc R^2)=0$.\\
On the other hand,  an Einstein manifold with  identically zero $h_4$ must be  flat,  \cite{Labbidoubleforms}. This completes the proof of the corollary.\\
\end{proof}

Next, we are going to generalize the above corollary  to higher dimensions. First, we start by a definition
\begin{definition}[\cite{Labbi-eins}]
Let $0<2q<n$, we shall say that a  Riemannian $n$-manifold
 is \emph{ hyper $(2q)$-Einstein}  if the first contraction
 of the tensor  $R^q$
is proportional to the metric
 $g^{2q-1}$, that is
$$\cc R^q =\lambda g^{2q-1}.$$
\end{definition}
We recover the usual Einstein metrics for $q=1$. The following theorem generalizes a well known result about 4 dimensional Einstein manifolds
\begin{theorem}[\cite{Labbidoubleforms, Labbi-eins}]\label{hyper-eins}
Let  $k\geq 1$ and  $(M,g)$ be a hyper  $(2k)$-Einstein  manifold 
of dimension  $n\geq 4k$. Then  the Gauss-Bonnet curvature $h_{4k}$ of  $(M,g)$ is nonnegative. Furthermore, 
$h_{4k}\equiv 0$ if and only if  $(M,g)$ is $k$-flat.
\end{theorem}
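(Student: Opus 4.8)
The statement is pointwise and algebraic, so it is enough to work at a single point with the symmetric $(2k,2k)$ double form $\omega:=R^{k}$, which satisfies the first Bianchi identity and, by hypothesis, $\cc\omega=\lambda g^{2k-1}$, and to show that the $4k$-Gauss--Bonnet curvature $h_{4k}=\tfrac{1}{(4k)!}\cc^{4k}(\omega^{2})$ is nonnegative, vanishing only when $\omega=0$. The plan is to use the trace hypothesis to force $\omega$ into a rigid normal form, and then to evaluate $h_{4k}$ directly on that form.

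\textbf{Step 1: a normal form for $R^{k}$.} Consider the canonical $g$-adic decomposition $\omega=W_{0}+gW_{1}+\cdots+g^{2k}W_{2k}$, with each $W_{j}\in\mathcal D^{2k-j,2k-j}(V^{*})$ trace-free. Using the Kulkarni identity $\cc(g\eta)=g\,\cc\eta+(n-p-q)\eta$ for $\eta\in\mathcal D^{p,q}(V^{*})$, together with $\cc W_{j}=0$, one gets $\cc(g^{j}W_{j})=j(n-4k+j+1)\,g^{j-1}W_{j}$, so that $\cc\omega=\sum_{j\ge 1}j(n-4k+j+1)\,g^{j-1}W_{j}$ is itself in canonical form. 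Comparing with $\cc\omega=\lambda g^{2k-1}$ and observing that $n\ge 4k$ makes every coefficient $j(n-4k+j+1)$ with $1\le j\le 2k$ strictly positive, we conclude $W_{1}=\cdots=W_{2k-1}=0$ and $W_{2k}=c:=\lambda/\bigl(2k(n-2k+1)\bigr)$; that is, $R^{k}=W+c\,g^{2k}$ with $W:=W_{0}$ trace-free (hence automatically symmetric and Bianchi).

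\textbf{Step 2: evaluation of $h_{4k}$.} Because $\cc$ is the adjoint of multiplication by $g$, $\langle W,g^{2k}\rangle=\cc^{2k}W=0$. Expand $\omega^{2}=W^{2}+2c\,g^{2k}W+c^{2}g^{4k}$ and contract $4k$ times. The term $c^{2}g^{4k}$ contributes the nonnegative number $\tfrac{c^{2}\,n!}{(n-4k)!}$. The cross term contributes nothing: contracting $g^{2k}W$ the first $2k$ times uses up all the factors of $g$ and leaves a scalar multiple of $W$, so one more contraction produces $\cc W=0$, whence $\cc^{4k}(g^{2k}W)=0$. Finally $W^{2}$ contributes $h_{4k}(W)=\tfrac{1}{(4k)!}\cc^{4k}(W^{2})$, the $4k$-Gauss--Bonnet curvature of the trace-free Bianchi double form $W$; by the Avez-type identity (see \cite{Labbidoubleforms,Avez,Greub-pontjagin}), this equals a strictly positive universal constant $c_{n,k}$ times $\|W\|^{2}$ --- for $k=1$ it is exactly $\|W\|^{2}$, the classical formula for the Weyl tensor. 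Hence $h_{4k}=c_{n,k}\|W\|^{2}+\tfrac{n!}{(n-4k)!}\,c^{2}\ge 0$, and $h_{4k}=0$ at a point exactly when $W=0$ and $c=0$ there, i.e. when $R^{k}=0$. Therefore $h_{4k}\equiv 0$ if and only if $(M,g)$ is $k$-flat.

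\textbf{Main obstacle.} The crux is Step 1: verifying that $\cc$ acts on the $g$-adic filtration of $R^{k}$ precisely as claimed and that none of the structure constants $j(n-4k+j+1)$ degenerates for $n\ge 4k$, so that the hyper $(2k)$-Einstein hypothesis genuinely collapses $R^{k}$ to $W+c\,g^{2k}$ rather than merely constraining its first contraction. The other essential input is the positivity $h_{4k}(W)=c_{n,k}\|W\|^{2}$, $c_{n,k}>0$, for trace-free Bianchi $W$: this is the $(2k,2k)$-analogue of the cancellation of all the iterated-trace ("lower order") terms in the Avez expansion of the Gauss--Bonnet integrand, and it is cleanest to quote it from \cite{Labbidoubleforms}. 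Everything else reduces to routine manipulations with the exterior, composition and Hodge-star operations on double forms already developed above.
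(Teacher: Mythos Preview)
The present paper does not prove this theorem: it is quoted from \cite{Labbidoubleforms, Labbi-eins} and invoked as a black box in the proof of the subsequent result. So there is no in-paper argument to compare against; your proposal is a correct reconstruction of the proof in those references.

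Step~1 is sound. Iterating the identity $\cc(g\eta)=g\,\cc\eta+(n-p-q)\eta$ on trace-free $W_{j}\in\mathcal D^{2k-j,2k-j}$ gives exactly $\cc(g^{j}W_{j})=j(n-4k+j+1)\,g^{j-1}W_{j}$, and for $n\ge 4k$ each coefficient is strictly positive, so the hypothesis $\cc R^{k}=\lambda g^{2k-1}$ genuinely forces $R^{k}=W+c\,g^{2k}$ with $W$ trace-free. Since both $R^{k}$ and $g^{2k}$ are symmetric and satisfy the first Bianchi identity, so does $W=R^{k}-c\,g^{2k}$.

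Step~2 is also correct, but be aware that the decisive input you call the ``Avez-type identity'' is precisely the substance of the references you are reproving. The relevant statement from \cite{Labbidoubleforms} is that for a symmetric $(p,p)$ Bianchi double form $\omega$ one has
\[
\frac{1}{(2p)!}\,\cc^{2p}(\omega^{2})=\sum_{r=0}^{p}\frac{(-1)^{p+r}}{(r!)^{2}}\,\|\cc^{r}\omega\|^{2};
\]
with $p=2k$ even and $\omega=W$ trace-free this collapses to $\|W\|^{2}$, so in fact $c_{n,k}=1$ independently of $n$. Combined with your (correct) handling of the cross term $\cc^{4k}(g^{2k}W)=0$ and the pure-$g$ term, one gets $h_{4k}=\|W\|^{2}+\tfrac{n!}{(n-4k)!}\,c^{2}\ge 0$, with equality exactly when $W=0$ and $c=0$, i.e.\ $R^{k}=0$. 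The argument is therefore complete, with the understanding that the contraction identity above is being imported rather than proved.
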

Recall that $k$-flat means that the sectional curvature of  $R^k$ is identically zero.\\
Now, we are ready to state and prove a generalization of the above corollary \ref{Kim}
\begin{theorem}
A compact orientable manifold $M$  of dimension $8k$ that is at the same time hyper $2k$-Einstein and Thorpe and satisfies $\frac{(4k)!}{(2k)!(2k)!}\chi(M)=|p_{2k}(M)|$  must be $k$-flat.
\end{theorem}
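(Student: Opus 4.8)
The plan is to combine Theorem~\ref{Thorpe-thm} (applied with the Pontrjagin number $p_{2k}$ on the $8k$-dimensional manifold) with Theorem~\ref{hyper-eins} in exactly the way Corollary~\ref{Kim} handles the dimension-$8$ case, but now one step of contraction is not enough. First I would note that $\dim M = 8k = 4(2k)$, so Theorem~\ref{Thorpe-thm} applies with the integer $2k$ in place of $k$: the hypothesis $\frac{(4k)!}{(2k)!(2k)!}\chi(M)=|p_{2k}(M)|$ is precisely the equality case of that theorem, hence the metric is $2k$-Ricci flat, i.e. $\cc R^{2k}=0$. (One should double-check the factorials: Theorem~\ref{Thorpe-thm} gives $\chi(M)\geq \frac{m!m!}{(2m)!}|p_m(M)|$ for a $4m$-dimensional Thorpe manifold, and with $m=2k$ this reads $\chi\geq\frac{(2k)!(2k)!}{(4k)!}|p_{2k}|$, matching the stated equality.)

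Next I would feed this into the hyper $(2k)$-Einstein hypothesis. By definition $\cc R^k=\lambda g^{2k-1}$ for some constant $\lambda$. The goal is to show $\lambda=0$, because once $\cc R^k=0$ Theorem~\ref{hyper-eins} forces $h_{4k}\equiv 0$ and hence $k$-flatness, $R^k=0$ — but wait, Theorem~\ref{hyper-eins} is stated for dimension $\geq 4k$ and concludes $k$-flat; here the ambient dimension is $8k\geq 4k$, so it does apply. So the crux is: deduce $\lambda = 0$ from $\cc R^{2k}=0$ together with $\cc R^k=\lambda g^{2k-1}$. The natural route is to contract the identity $\cc R^k=\lambda g^{2k-1}$ further and relate iterated contractions of $R^k$ to contractions of $R^{2k}$. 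Using the multiplicativity of contraction over the exterior product of double forms (the kind of combinatorial identity $\cc(R^k R^k)=(\cc R^k)R^k + \dots$ that underlies the Gauss–Bonnet manipulations in the paper), one expands $\cc^{j}R^{2k}=\cc^{j}(R^k R^k)$ as a sum of terms each containing a factor $\cc^{a}R^k$ with $a\leq j$; substituting $\cc R^k=\lambda g^{2k-1}$ and iterating $\cc(g\cdot)$ turns everything into an expression of the form (nonzero combinatorial constant)$\cdot\lambda^{2}\cdot g^{\text{something}}\cdot(\text{lower contractions})$, so that $\cc^{j}R^{2k}=0$ for a suitable $j$ propagates back to $\lambda^2=0$, hence $\lambda=0$.

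A cleaner way to organize the last step, which I would actually write out: take the full contraction. Apply $\cc^{4k}$ to $R^{2k}=R^kR^k$. On one hand $\cc^{4k}R^{2k}=\cc^{3k}(\cc^{k}R^{2k})$, and since $\cc^{2k}R^{2k}=\cc^{2k-1}(\cc R^{2k})=0$ we already get $\cc^{4k}R^{2k}=0$ and in fact $\cc^{j}R^{2k}=0$ for all $j\geq 2k$. On the other hand, iterating $\cc R^k=\lambda g^{2k-1}$ gives $\cc^{2k}R^k = \lambda\,\cc^{2k-1}(g^{2k-1}) = c_1\lambda$ for an explicit positive constant $c_1$ (the scalar obtained by fully contracting $g^{2k-1}$), i.e. the scalar curvature of $R^k$ is $c_1\lambda$. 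Then $\cc^{4k}(R^kR^k)$ equals, up to positive combinatorial constants, a sum whose leading term is $(\cc^{2k}R^k)^2 = c_1^2\lambda^2$ and whose remaining terms also reduce (via $\cc R^k = \lambda g^{2k-1}$) to positive multiples of $\lambda^2$; all signs are the same because everything is built from $g$ and the nonnegative scalar $\lambda^2$, so $\cc^{4k}(R^kR^k)= C\lambda^2$ with $C>0$. Equating with $0$ gives $\lambda=0$, hence $\cc R^k=0$, hence by Theorem~\ref{hyper-eins} $h_{4k}\equiv 0$ and $R^k=0$, i.e. $M$ is $k$-flat.

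The main obstacle I anticipate is precisely the bookkeeping in that last paragraph: verifying that when one expands $\cc^{4k}(R^kR^k)$ by repeatedly applying the antiderivation-type rule for $\cc$ over the exterior product and substitutes $\cc R^k=\lambda g^{2k-1}$, \emph{all} the resulting terms are nonnegative multiples of $\lambda^2$ (so there is no cancellation that could allow $\lambda\neq 0$). This requires care with the exact combinatorial coefficients relating $\cc(g\omega)$ to $g(\cc\omega)$ and $\omega$ in the bigraded algebra (the familiar $\cc g^j = (n-2\cdot\text{degree}+j-1)\,g^{j-1}\cdot(\text{id}) + \dots$ type formulas from the double-forms literature), and with checking the degree constraints $n=8k$ make the relevant constants strictly positive. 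Everything else — invoking Theorem~\ref{Thorpe-thm} in the equality case and then Theorem~\ref{hyper-eins} to pass from $\cc R^k=0$ to $R^k=0$ — is a direct citation of results already in the paper.
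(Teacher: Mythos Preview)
Your overall strategy (invoke the equality case of Theorem~\ref{Thorpe-thm} with $m=2k$ to get $\cc R^{2k}=0$, then feed this into Theorem~\ref{hyper-eins}) matches the paper's, but you insert an unnecessary intermediate step that does not actually work.

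The misstep is in how you read Theorem~\ref{hyper-eins}. That theorem does \emph{not} require $\lambda=0$; it applies to any hyper $(2k)$-Einstein manifold and asserts that $h_{4k}\geq 0$ with equality iff the manifold is $k$-flat. So once you have $\cc R^{2k}=0$, you already have
\[
h_{4k}=\frac{1}{(4k)!}\,\cc^{4k}R^{2k}=\frac{1}{(4k)!}\,\cc^{4k-1}(\cc R^{2k})=0,
\]
and Theorem~\ref{hyper-eins} applied to the hyper $(2k)$-Einstein hypothesis immediately gives $k$-flatness. This is exactly the paper's two-line proof; there is no need to show $\lambda=0$ separately.

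Your proposed route to $\lambda=0$ is also genuinely wrong, not just superfluous. The claim that $\cc^{4k}(R^kR^k)$ reduces, under $\cc R^k=\lambda g^{2k-1}$, to a positive multiple of $\lambda^2$ is false: the contraction $\cc$ is \emph{not} a derivation of the exterior product of double forms, so there is no Leibniz-type expansion of $\cc^{4k}(R^kR^k)$ into terms each carrying a factor $\cc^aR^k$. Concretely, already for $k=1$ in dimension $8$, writing $R=W+c\,g^2$ for an Einstein metric gives $R^2=W^2+2c\,g^2W+c^2g^4$, and $\cc^4(W^2)$ contributes a $|W|^2$ term to $h_4$ that has nothing to do with $\lambda$. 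So $h_4=0$ certainly does \emph{not} force $\lambda=0$ by your combinatorial mechanism; you would never close that gap. Drop the $\lambda=0$ detour and cite Theorem~\ref{hyper-eins} directly.
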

We recover  corollary \ref{Kim} for $k=1$. 
\begin{proof}
 Theorem \ref{Thorpe-thm} shows that $\cc R^{2k}=0$. Consequently, the $4k$-th  Gauss-Bonnet curvature  satisfies  $h_{4k}=\frac{1}{(4k)!}\cc^{4k} R^{2k}=\frac{1}{(4k)!}\cc^{4k-1}(\cc R^{2k})=0$.Then
Theorem \ref{hyper-eins} shows that the manifold must be $k$-flat.
\end{proof}
\section{Final remarks}
\subsection{Mixed Pontrjagin numbers}
Let $(M,g)$ be a compact $4k$-dimensional Riemannian oriented manifold with Riemann curvature tensor $R$, seen as a $(2,2)$ double form,  and $ k_1, k_2, ..., k_m$ a  collection of natural numbers  such that $k_1+2k_2+...+mk_m =k$.\\
The Pontrjagin number $p_1^{k_1}p_2^{k_2}\cdots p_m^{k_m}$  of $M$ is defined by the integral over $M$ of the following $4k$-form
 $$ P_1^{k_1}P_2^{k_2}\cdots P_m^{k_m}:=\big(\underbrace{P_1\wedge \cdots \wedge P_1}_\text{$k_1$-times}\big)\wedge
\big(\underbrace{P_2\wedge \cdots \wedge P_2}_\text{$k_2$-times}\big)\wedge \cdots \wedge
\big(\underbrace{P_m\wedge \cdots \wedge P_m}_\text{$k_m$-times}\big).$$
Where for each $i$,  $P_i=P_{i}(R)$ denotes the $i$-th Pontrjagin form of $R$ as defined in section  \ref{Pontrjagin-forms}.\\
The above Pontrjagin numbers are important topological  invariants for a manifold, they
are oriented cobordism invariant, and together with Stiefel-Whitney numbers they determine an oriented manifold's oriented cobordism class. Furthermore, the signature and the $\hat{A}$ genus can be expressed explicitely through linear combinations of the above  Pontrjagin numbers.\\
We are now going to prove a Stehney type formula for all these numbers.
\begin{theorem}
With the above notations we have
\begin{equation*}
 P_1^{k_1}P_2^{k_2}\cdots P_m^{k_m}=\frac{(4k)!}{[(2k)!]^2(2\pi)^{2k}}  \Big(\prod_{i=1}^{m}\frac{[(2i)!]^2}{(i!)^{2k_i}(4i)!}\Big){\rm Alt}\Big[(R\circ R)^{k_1}(R^2\circ R^2)^{k_2}\cdots (R^m\circ R^m)^{k_m}\Big].
\end{equation*}
Where all the powers over double forms are taken with respect to the exterior product of double forms. In particular, the pontrjagin numbers are given by the integral
\begin{equation*}
\begin{split}
 &p_1^{k_1}P_2^{k_2}\cdots P_m^{k_m}=\\
&\frac{(4k)!}{[(2k)!]^2(2\pi)^{2k}}  \Big(\prod_{i=1}^{m}\frac{[(2i)!]^2}{(i!)^{2k_i}(4i)!}\Big)\int_M \frac{\cc^{2k}}{(2k)!}\Big[  \Big( (R\circ R)^{k_1}(R^2\circ R^2)^{k_2}\cdots (R^m\circ R^m)^{k_m}\Big)\circ \omega_g\Big]\omega_g.
\end{split}
\end{equation*}
Where $\cc$ is the contraction map of double forms and $\omega_g$ is the volume form as above.
\end{theorem}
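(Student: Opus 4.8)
The plan is to reduce the whole statement to Proposition~\ref{Alt-is-endom} together with the $\omega_g$-pairing identities. Set $\pi(\omega):=\frac{(p+q)!}{p!q!}{\rm Alt}(\omega)$ for $\omega\in\mathcal{D}^{p,q}(V^*)$; in this notation Proposition~\ref{Alt-is-endom} says $\pi(\omega_1\omega_2)=(-1)^{qr}\pi(\omega_1)\wedge\pi(\omega_2)$ whenever $\omega_1\in\mathcal{D}^{p,q}$, $\omega_2\in\mathcal{D}^{r,s}$. The first thing I would note is purely a bidegree bookkeeping: each factor $R^i\circ R^i$ is a $(2i,2i)$ double form, hence every exterior power $(R^i\circ R^i)^{k_i}$ is a $(2ik_i,2ik_i)$ double form and, since $\sum_i 2ik_i=2k$, the full product lies in $\mathcal{D}^{2k,2k}(V^*)$. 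Thus every bidegree occurring while one expands the product is of the shape $(2a,2a)$ with $a$ even, so every sign $(-1)^{qr}$ produced by iterating Proposition~\ref{Alt-is-endom} is $+1$. Consequently $\pi$ is multiplicative without sign on the subalgebra generated by $R\circ R,\,R^2\circ R^2,\dots$, giving
\[
\pi\Big[(R\circ R)^{k_1}(R^2\circ R^2)^{k_2}\cdots(R^m\circ R^m)^{k_m}\Big]=\prod_{i=1}^{m}\big(\pi(R^i\circ R^i)\big)^{k_i},
\]
the powers on the right being exterior products of (even-degree, hence commuting) forms.

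Next I would evaluate each factor. From the definition \eqref{Stehney-formula} one has ${\rm Alt}(R^i\circ R^i)=(i!)^2(2\pi)^{2i}P_i$, and since $R^i\circ R^i\in\mathcal{D}^{2i,2i}$,
\[
\pi(R^i\circ R^i)=\frac{(4i)!}{[(2i)!]^2}{\rm Alt}(R^i\circ R^i)=\frac{(4i)!\,(i!)^2}{[(2i)!]^2}(2\pi)^{2i}P_i .
\]
Plugging this into the previous identity, collecting the powers of $2\pi$ using $\sum_i 2ik_i=2k$, writing $\pi$ of the full product as $\frac{(4k)!}{[(2k)!]^2}{\rm Alt}(\cdots)$ (the product being a $(2k,2k)$ double form), and solving for $P_1^{k_1}P_2^{k_2}\cdots P_m^{k_m}$, one reads off precisely the coefficient displayed in the statement in front of ${\rm Alt}\big[\cdots\big]$. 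The only work involved here is tracking the three families of factorials $(4i)!$, $(2i)!$, $(i!)$ through the product.

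For the ``in particular'' formula I would restrict to a Riemannian $4k$-manifold $M$ and use that $\Lambda^{4k}(V^*)$ is one-dimensional, so pointwise $P_1^{k_1}\cdots P_m^{k_m}=\langle P_1^{k_1}\cdots P_m^{k_m},\omega_g\rangle\,\omega_g$. Writing $\Psi$ for the $(2k,2k)$ double form $(R\circ R)^{k_1}\cdots(R^m\circ R^m)^{k_m}$, the pairing $\langle{\rm Alt}(\Psi),\omega_g\rangle$ is computed exactly as in the proof of Proposition~\ref{prop2.5}: by Proposition~\ref{mu-star}(1) and the fact that $\cc$ is adjoint to multiplication by $g$,
\[
\langle{\rm Alt}(\Psi),\omega_g\rangle=\langle\Psi\circ\omega_g,\omega_g\circ\omega_g\rangle=\Big\langle\Psi\circ\omega_g,\frac{g^{2k}}{(2k)!}\Big\rangle=\frac{1}{(2k)!}\cc^{2k}\big(\Psi\circ\omega_g\big).
\]
Integrating this over $M$ against $\omega_g$ turns the first formula into the second.

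The main thing to get right is the opening bidegree argument: one must be sure that only even bidegrees appear so that all the signs in the iterated use of Proposition~\ref{Alt-is-endom} vanish, and then keep the nested factorial factors straight. There is no conceptual obstacle beyond what is already developed in the excerpt --- the theorem is essentially the endomorphism property of ${\rm Alt}$ applied to the specific double forms $R^i\circ R^i$, supplemented by the volume-double-form identities of Proposition~\ref{mu-star}.
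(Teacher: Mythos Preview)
Your argument is correct and is essentially the paper's own proof: both iterate Proposition~\ref{Alt-is-endom} on the even-bidegree factors $R^i\circ R^i$ (your normalization $\pi$ is a clean way to package the factorials in the paper's formulas \eqref{first}--\eqref{second}), and the integral formula is obtained exactly by mimicking Proposition~\ref{prop2.5} via Proposition~\ref{mu-star}. One harmless slip: the bidegrees are $(2a,2a)$ but $a$ need not be even (e.g.\ $R\circ R\in\mathcal{D}^{2,2}$); what kills the sign $(-1)^{qr}$ is simply that $q$ and $r$ are themselves even, so $qr$ is divisible by $4$.
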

\begin{proof}
Let $\omega_i$, for $i=1,2,\cdots,k$, be a collection of arbitrary  $(2p_i,2p_i)$ double forms, then  successive applications of Proposition \ref{Alt-is-endom} show that
\begin{equation}\label{first}
\frac{\big(\sum_{i=1}^{k}4p_i\big)!}{\big[\big(\sum_{i=1}^{k}2p_i\big)!\big]^2}{\rm Alt}\big(\omega_1\omega_2\cdots \omega_k\big)=\Big(\prod_{i=1}^{k}\frac{(4p_i)!}{\big[(2p_i)!\big]^2}\Big){\rm Alt}(\omega_1)\wedge {\rm Alt}(\omega_2)\wedge \cdots \wedge {\rm Alt}(\omega_k).
\end{equation}
In particular, if $\omega$ is an arbitrary $(2p,2p)$ double form one has
\begin{equation}\label{second}
\frac{    (4pk)!}{[(2pk)!]^2}{\rm Alt}(\omega^k)=\Big[ \frac{(4p)!}{[(2p)!]^2}\Big]^k   \underbrace{ {\rm Alt}(\omega)\wedge  \cdots \wedge {\rm Alt}(\omega)}_\text{$k$-times}.
\end{equation}
Using the previous two formulas, one can directly and without difficulties  complete the proof of the theorem. The second part can be proved easily by imitating the proof of proposition \ref{prop2.5}.
\end{proof}

\subsection{Stehney's formula for Pontrjagin forms}
For the seek of completeness,  we include here the derivation of   Stehney's formula (\ref{Stehney-formula})   from Chern's theorem \cite{Chern}.\\
Let $(M,g)$ be a Riemannian $n$-manifold with Riemann curvature tensor $R$,  $m\in M$ and  $(e_i)$   an orthonormal basis of the tangent space at $m$.  Chern theorem \cite{Chern} shows that at $m$  we have
$$P_k(R)=\frac{[(2k)!]^2}{(4k)!(2\pi)^{2k}(2^kk!)^2}\sum_{I}^{}\Omega_I\wedge\Omega_I.$$
Where the sum runs over all multi-indices $I=(i_1,...,i_{2k})$ such that $1\leq i_1<...<i_{2k}\leq n$, and 
$$\Omega_I(v_1,...,v_{2k})=2^k  R^k(v_1\wedge ...\wedge v_{2k};e_I).$$
We used the notation $e_I=e_{i_1}\wedge ...\wedge e_{i_{2k}}$. We emphasize that our convention for the wedge product is as defined by formula (\ref{Alt-conv}).
Next, let $J,K$ in the sums below run over all multi-indices of length $2k$ as it was the case for $I$ in the previous sum, then  we have
\begin{align*}
\frac{1}{2^{2k}}\sum_{I}^{}\Omega_I\wedge\Omega_I=& \frac{1}{2^{2k}}\sum_{I,J,K}^{}\Omega_I(e_J)e_J^*\wedge\Omega_I(e_K)e_K^*\\
=& \sum_{I,J,K}^{}R^k(e_J,e_I)R^k(e_K,e_I)e_J^*\wedge e_K^*\\
=& \sum_{J,K}^{}R^k\circ R^k(e_J,e_K) e_J^*\wedge e_K^*\\
=&\frac{(4k)!}{(2k)!(2k)!} {\rm Alt}\Big( \sum_{J,K}^{}R^k\circ R^k(e_J,e_K) e_J^*\otimes e_K^*\Big)\\
=& \frac{(4k)!}{(2k)!(2k)!} {\rm Alt}(R^k\circ R^k).
\end{align*}
Finally,
$$P_k(R)=\frac{[(2k)!]^2}{(4k)!(2\pi)^{2k}(2^kk!)^2}\sum_{I}^{}\Omega_I\wedge\Omega_I= \frac{2^{2k}{\rm Alt}(R^k\circ R^k)}{(2\pi)^{2k}(2^kk!)^2}= \frac{1}{(2\pi)^{2k}(k!)^2}{\rm Alt}(R^k\circ R^k) .$$

\address{Mathematics Department\\
College of Science\\
University of Bahrain\\
32038 Bahrain.\\
E-mail: mlabbi@uob.edu.bh}
\end{document}